\documentclass[12pt]{article}
\usepackage{amsmath}
\usepackage{amsfonts}
\usepackage{mathrsfs}
\usepackage{amssymb}
\usepackage{amsthm}
\usepackage{bigints}
\usepackage{amstext}
\usepackage{color}
\usepackage{graphicx}
\usepackage{xspace}
\usepackage[backend=biber, style=nature,sorting=nty]{biblatex}
\usepackage{color}

\newtheorem{theorem}{Theorem}

\newtheorem{corollary}[theorem]{Corollary}

\newtheorem{example}[theorem]{Example}

\newtheorem{proposition}[theorem]{Proposition}

\def\A{\mathcal{A}}

\def\H{\mathbb{H}}
\def\C{\mathbb{C}}

\def\R{\mathbb{R}}

\def\Sc{\mbox{Sc}\,}
\def\Vec{\mbox{Vec}\,}

\def\iff{\Leftrightarrow}

\def\div{\mbox{div}\,}

\def\curl{\mbox{curl}\,}
\def\H{\mathbb{H}}
\def\B{\mathbb{B}}
\def\C{\mathbb{C}}

\def\R{\mathbb{R}}

\def\Sc{\mbox{Sc}\,}
\def\Vec{\mbox{Vec}\,}

\def\iff{\Leftrightarrow}

\let\epsilon\varepsilon

\let\phi\varphi

\newcommand{\conj}[1]{\overline{#1}}
\newcommand{\inner}[1]{\left\langle #1 \right\rangle}
\newcommand{\norm}[1]{\left|\left| #1 \right|\right|}
\newcommand{\fourier}[1]{\mathcal{F}\left( #1 \right)}
\newcommand{\fourierinv}[1]{\mathcal{F}^{-1}\left( #1 \right)}
\newcommand{\expo}[1]{\exp\left( #1\right)}

\newcommand{\Twm}[1]{T_{\mathscr{C},-}\left[ #1\right]}
\newcommand{\Real}[1]{\text{Re}\left( #1 \right)}
\newcommand{\Imag}[1]{\text{Im}\left( #1 \right)}

\begin{document}

\title{A biquaternionic reformulation of Maxwell's equations via Fourier analysis}
\makeatletter
\author{Aarón Guillén-Villalobos \and Briceyda B. Delgado, \and Héctor Vargas Rodríguez}

\date{}
\maketitle
\begin{abstract}
    We analyze the parabolic Dirac operator $D \pm i\partial_t$ in a biquaternionic setting, characterizing its kernel via generalized div-curl systems and Cauchy-Riemann-type relations between the real and imaginary parts. Using the machinery provided by the Fourier transform, we fully characterize the Fourier transform of a fundamental solution of this operator and construct a well-defined right inverse operator. As an application, we derive explicit vectorial solutions to the time-dependent Maxwell system, extending prior biquaternionic approaches. These tools offer analytical efficiency for complex electromagnetic problems.

\end{abstract}
\mbox{}\\
\noindent \textbf{Keywords:} Maxwell's system, Fourier transform, wave equation, parabolic Dirac\\ operators.
\mbox{}\\
\noindent \textbf{MSC Classification:} 42A38, 47S02, 35Q61, 30G35

\section{Introduction}
In this work, we employ the following factorization of the wave operator 

\begin{equation*}
    -\Delta+\partial_{tt}=(D+i\partial_t)(D-i\partial_t)=(D-i\partial_t)(D+i\partial_t),
\end{equation*}

where $i$ is the imaginary unit and $D$ is the Moisil-Teodorescu operator. We work in a biquaternionic setting and follow closely the previous results documented in the monograph \cite{kravchenko2003applied}. Other first-order perturbed Dirac operators have been reported in the literature for analyzing and factorizing the wave equation within Clifford algebra frameworks; see, for instance, \cite{eelbode2003solutions, eelbode2004inverse, eelbode2005fundamental, marmolejo2023basic}. In \cite{kravchenko1998kernel}, the authors showed that any solution of the Klein-Gordon equation can be represented through two different solutions of parabolic Dirac equations with the same mass, and in \cite{cerejeiras2008factorization}, the authors studied a general scheme for factorizing second-order time-dependent operators of mathematical physics. In both works, the key point is the reduction of second-order equations into first-order biquaternionic equations.

In this manuscript, we develop fundamental properties of the parabolic Dirac operators $D\pm i \partial_t$. Instead of seeking a fundamental solution $\Phi_{\pm}$ of $D\pm i \partial_t$, we fully characterize its Fourier transform through the exponential quaternionic function. Indeed, 

\begin{align*}
    \fourier{\Phi_\pm}(k,t)=\expo{\mp 2\pi kt}=\cos\left(2\pi|k|t\right)\mp\frac{k}{|k|}\sin\left(2\pi|k|t\right),
\end{align*}

for all $(k,t)\in \mathbb{R}^3\times \mathbb{R}$. Moreover, using properties of the Fourier transform and the quaternionic exponential function, we propose a right inverse operator for these parabolic Dirac operators. As an application, we analyze the time-dependent Maxwell system. Quaternionic and biquaternionic analysis has been previously applied to such physical systems. For the time-harmonic Maxwell system, a series of works \cite{Kravchenko1992, DelKrav2019, delgado2023biquaternionic} adopt a biquaternionic approach. In particular, \cite{DelKrav2019, delgado2023biquaternionic} employ classical integral operators in quaternionic analysis, along with a completion monogenic process that generalizes the classical harmonic conjugate concept from complex analysis.

The Maxwell equations hold importance across many fields of science and applied mathematics. For example, \cite{mitskievich2008classification,mitskievich2006electromagnetic} use invariants of the electromagnetic $2$-form to classify electromagnetic fields and study their propagation, while \cite{vargas2020relativistic} examines stationary electromagnetic fields with angular momentum via the Poynting vector.

Fourier analysis serves as a powerful tool not only in physics but also in digital image processing, signal processing, and cryptography. For instance, \cite{danchin2005fourier} applies it as an alternative method for solving the heat equation by replacing the spatial derivatives with Fourier-space variables, thereby transforming the partial differential equation into a simpler linear differential equation. In \cite{andrade2023maxwell}, the authors use the Fourier transform to recast Maxwell's equations into a system of dot and cross products, which changes both the space and time partial derivatives to the Fourier space variables. 

The outline of the paper is as follows. In Section \ref{sec:preliminaries}, we recall definitions of biquaternionic algebra along with basic properties of the Dirac operator $D$ and the Moisil--Teodorescu system. In Section \ref{sec:parabolic-Dirac-operator}, we derive preliminary results characterizing the kernel of $(D+i\partial_t)$. Proposition \ref{equivalent:system:proposition} establishes that the kernel of $D\pm i\partial_t$ corresponds to a generalized div-curl system: $w=u+iv=(u_0+\vec{u})+i(v_0+\vec{v})\in \text{Ker}(D\pm i\partial_t)$ if and only if
\begin{equation*}
    \begin{array}{rcl}
        -\div(\vec{u})&=&\pm \partial_t v_0,\\
        \nabla u_0+\curl(\vec{u})&=& \pm\partial_t \vec{v},\\
        -\div(\vec{v})&=&\mp\partial_t u_0,\\
        \nabla v_0+\curl(\vec{v})&=&\mp\partial_t \vec{u}.
    \end{array}
\end{equation*}

Meanwhile, Proposition \ref{prop:completation} shows that if $u$ is a complex-valued function satisfying the wave equation, we can construct a quaternion-valued function $v$ such that $u\pm iv$ solves $(D\pm i\partial_t)$. In Section \ref{sec:inverse_dirac_operator}, we construct a right inverse operator for $D+i\partial_t$ (see Theorem \ref{th:right-inverse}) and prove it is well-defined on $L^2(\mathscr{C},\mathbb{B})$. Finally, in Section \ref{sec:rel_maxwell_equations}, we apply the right inverse of the perturbed Dirac operators $D\pm i\partial_t$ to solve the time-dependent Maxwell system in differential form and Cartesian coordinates. In Subsection \ref{subsec:maxwell_homogeneous}, we generalize the results obtained in previous sections. We do a similar analysis for the operator $D\pm i\lambda \partial_t$, where $\lambda\in \mathbb{R}$, and we provide a general solution of Maxwell's equations in a homogeneous and isotropic medium.

\section{Preliminaries}\label{subsec:Preliminaries}\label{sec:preliminaries}
Let us consider the algebra of biquaternions and quaternions, denoted by $\B$ and $\mathbb{H}$, respectively. Let $w_1,w_2\in\B$, we will denote by $u_i, v_i\in \mathbb{H}$ to their respective real and imaginary parts, $i=1,2$. That is,

\begin{equation*}
    w_1=u_1+iv_1, \quad w_2=u_2+iv_2.
\end{equation*}

Following the classical complex rule multiplication, then 

\begin{align*}
    w_1w_2&=(u_1+iv_1)(u_2+iv_2)=u_1u_2-v_1v_2+i(u_1v_2+v_1u_2),
\end{align*}

where $u_1u_2,v_1v_2,u_1v_2$ and $v_1u_2$ follow the classical quaternionic multiplication.

Let us denote the scalar part and vector part of $u$ by $u_0=\Sc{u}$ and $\vec u=\Vec{u}$, respectively. Let $w=u+iv\in\B$, then 

\begin{equation*}
    \begin{array}{rcccl}
        \Sc(w)&=&\Sc(u)+i\Sc(v)&=& u_0+iv_0,\\
        \Vec(w)&=&\Vec(u)+i\Vec(v)&=&\vec{u}+i\vec{v}.
    \end{array}
\end{equation*}

Now, let us denote by $\conj{w}$ the quaternionic conjugation, and $w^*$ stands for the usual complex conjugation. Indeed,
\begin{equation*}
    \begin{array}{rcl}
        \conj{w}=(u_{0}-\vec{u})+i(v_{0}-\vec{v})=(u_0+iv_0)-(\vec u+i\vec v), \quad w^*=(u_{0}+\vec{u})-i(v_{0}+\vec{v}).
    \end{array}
\end{equation*}

From now on, $\Omega\subseteq \R^3$ will be a bounded domain. We will denote the space-time domain by $\mathscr{C}:=\Omega\times (0,\infty)$, $C^{2,2}(\mathscr{C},\mathcal{B})$ will be the space of functions with continuous second-order spatial and temporal partial derivatives defined in $\mathscr{C}$ and taking values on $\mathcal{B}\in \{\mathbb{R}, \mathbb{C},\mathbb{R}^3, \mathbb{C}^3,\H, \mathbb{B}\} $, and $C^{1,1}(\mathscr{C},\mathcal{B})$ will denote the space of functions whose first-order spatial and temporal partial derivatives are continuous. Throughout this work, we will work with biquaternionic-valued functions. That is, $w:\mathscr{C}\to\B$, and $w$ has the form
\begin{equation}\label{eq:real+imaginary}
    w=\sum_{k=0}^3 e_iw_i= \underbrace{u}_{\Real{w}}+i\underbrace{v}_{\Imag{w}},
\end{equation}
where $w_i:\mathscr{C}\to\C$, $i=0,1,2,3$, are complex-valued functions, and $u, v: \mathscr{C}\to\H$ are quaternionic-valued functions.

Recall that the definition of a $L^p(\A, \mathcal{B} )$ is defined as all the set of functions $w$ with domain $\A$ and taking values on $\mathcal{B}$, where $\mathcal{A}\in \{\mathscr{C}, \R^3\times(0,\infty)\}$ and $\mathcal{B}\in \{\mathbb{R}, \mathbb{C},\mathbb{R}^3, \mathbb{C}^3,\H, \mathbb{B}\} $, such that every component of $w$ is integrable over $\A$, for $p$ in $[1,\infty)$, also recall the Sobolev space $W^{k,p}(\A, \mathcal{B})$ defined as the subspace of all the integrable functions in $L^p(\A,\mathcal{B})$ such that every component has $k$ weak derivatives, for $k$ a positive integer. Now, we define the \textit{Moisil--Teodorescu differential operator} as follows

\begin{equation}\label{eq:Dirac-operator}
	D=\sum_{i=1}^3e_i\partial_i,
\end{equation}
where $\partial_i$ represents the partial derivative with respect to the variable $x_i$, for each $i=1,2,3$. In the literature, operator \eqref{eq:Dirac-operator} is also known as the \textit{Dirac operator}.

If the operator $D$ acts over functions in the set $C^{1,1}(\Omega, \B)$, which is the set of $C^{1,1}$-functions defined on $\Omega$ and taking values in $\B$, then

\begin{equation}\label{eq:left-right-Dirac}
    Dw=\sum_{i=1}^3 e_i \partial_i w, 
\end{equation}

for every $w\in C^{1,1}(\Omega, \B)$. In particular, when $w=u+iv \in C^{1,1}(\Omega, \B)$, then \eqref{eq:left-right-Dirac} reduces to

\begin{equation}\label{eq:deco-div-curl}
    Dw=Du +iDv,
\end{equation}

where the action over every quaternion-valued functions $u=u_0+\vec{u}$ and $v=v_0+\vec{v}$ is given by the classical rule

\begin{equation}\label{eq:action-D}
   Du=-\div\vec{u}+\nabla u_0+\curl\vec{u}, \qquad Dv=-\div\vec{v}+\nabla v_0+\curl\vec{v}.
\end{equation}

Notice that $Du, Dv$ are expressed in terms of the gradient $\nabla$, the divergence $\div$, and the curl (or rotational $\curl$). This particular form of decomposing the Dirac operator for biquaternionic-valued functions will be useful in the development of the present work.

A function $w\in C^{1,1}(\Omega,\B)$ is called \textit{left-monogenic} in $\Omega$ when $Dw=0$ in $\Omega$. Respectively, it is called \textit{right-monogenic} when $wD=0$ in $\Omega$. Some basic properties of the Moisil-Teodorescu operator and left-monogenic functions can be found in references \cite{brackx1982, kravchenko2003applied, GuHaSpr2008}. By \eqref{eq:deco-div-curl}-\eqref{eq:action-D}, we can observe that $w:\Omega\to\B$ is left-monogenic in $\Omega$ if and only if $u, v$ simultaneously satisfy in $\Omega$

\begin{equation}\label{eq:div-curl-monogenic}
    \begin{array}{rclcrcl}
        \div \vec u&=&0,& &\div\vec v&=&0\\
        \curl\vec u&=&-\nabla u_0 & & \curl\vec v&=&-\nabla v_0
    \end{array}
\end{equation}
The system \eqref{eq:div-curl-monogenic} is called the \textit{Moisil--Teodorescu system}. It is worth mentioning that it can be considered as the most natural generalization of the Cauchy--Riemann system in three dimensions \cite{kravchenko2003applied}.

Let us consider the \textit{Cauchy kernel}
\begin{equation}\label{eq:Cauchy}
    E(x)=\frac{-x}{4\pi|x|^3},\qquad x\in\mathbb{R}^3\setminus\{0\}.
\end{equation}

It is well-known that the Cauchy kernel is a fundamental solution of the Moisil-Teodorescu operator $D$. For each $w\in L^p(\Omega,\B)$, the \textit{Teodorescu transform} of $w$ is defined as
\begin{equation}\label{eq:Teodorescu}
    T_\Omega[w](x)=-\int_{\Omega} E(y-x)w(y)\, dy\qquad x\in\mathbb{R}^3,
\end{equation}
where $E$ is the Cauchy kernel defined in \eqref{eq:Cauchy}.
In fact, the Teodorescu transform is a right inverse of $D$ in $L^p(\Omega,\B)$, for $p>1$. More precisely, $DT_{\Omega}[w]=w$, for every $w\in L^p(\Omega,\B)$.

We will consider the \textit{wave operator}, $-\Delta+\partial_{tt}$, acting over functions in $C^{2,2}(\mathscr{C})$, it acts naturally component-wise. We say that $w$ satisfies the homogeneous wave equation if

\begin{equation}\label{eq:wave-equation}
    (-\Delta+\partial_{tt})w=0, \quad \text{ in }\Omega.
\end{equation}

\subsection{Background on the Fourier transform}
Let $f\in L^1(\mathbb{R}^3)$. Following \cite{lieb2001analysis}, we define the Fourier transform of a function $f:\R^3\to\B$, denoted by $\fourier{f}$ as follows

\begin{align}\label{eq:Fourier}
    \fourier{f}(k)&=\int_{\R^3} \exp(-2\pi i\inner{k,x})f(x)\, dx=\sum_{i=0}^3 e_i\int_{\R^3} \exp(-2\pi i\inner{k,x})f_i(x)\, dx, 
\end{align}

for all $k\in \mathbb{R}^3$, where

\begin{align*}
    \inner{k,x}&=\sum_{i=1}^3 k_ix_i, \qquad \text{ and }\qquad f=\sum_{i=0}^3 f_ie_i,
\end{align*}

where $f_i:\R^3\to \mathbb{C}$, $i=0, 1, 2, 3.$ We usually work with functions defined in $\mathscr{C}=\Omega \times (0,\infty)$, by convention we will extend the function $f$ by defining $f(x,t)=0$ for all $x$ in $\mathbb{R}^3\setminus \overline{\Omega}$. Despite being functions defined in a subset of $\mathbb{R}^4$, we define the Fourier transform of $f$ at the point $(k,t)\in \mathscr{C}$ as in \eqref{eq:Fourier}. Indeed,
\begin{equation}\label{eq:Fourier-space-time}
    \fourier{f}(k,t):=\int_{\R^3}\exp(-2\pi i\inner{k,x})f(x,t)\,dx.
\end{equation}
Some fundamental properties of the Fourier transform are collected below. 
In particular, the map $f \mapsto \fourier{f}$ is invertible, with inverse 
operator denoted by $\fourierinv{f}$.

\begin{theorem}{\cite[Th.~5.5]{lieb2001analysis}}\label{theorem-1}
    For $f\in L^2(\R^3)$ we define 
    
    \begin{equation}\label{eq:definition-Fourier-inverse}
        \fourierinv{f}(x):=\fourier{f}(-x).
    \end{equation}
    
    Then
    
    \begin{equation}\label{eq:inversos}
        f=\fourierinv{\fourier{f}}.
    \end{equation}
\end{theorem}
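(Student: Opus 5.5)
This is the Fourier inversion theorem on $L^2(\R^3)$, cited from Lieb--Loss, so I would follow the standard Plancherel route. The plan is to first prove the identity for a dense class where everything is explicit, and then extend it by an $L^2$-isometry argument. Throughout, $\fourier{f}$ for $f\in L^2$ means the Plancherel extension: the $L^2$-limit of $\fourier{f_n}$ for any $f_n\in L^1\cap L^2$ with $f_n\to f$ in $L^2$. Since the transform in \eqref{eq:Fourier} acts componentwise, it suffices to treat complex-valued $f$; the $\B$-valued statement follows by applying the result to each $f_i$.

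\textbf{Step 1 (Gaussians).} For $g_\lambda(x)=\expo{-\pi\lambda|x|^2}$ with $\Real{\lambda}>0$, I would compute $\fourier{g_\lambda}(k)=\lambda^{-3/2}\expo{-\pi|k|^2/\lambda}$. This reduces to one variable, either by completing the square and shifting the contour, or by noting that both sides are analytic in $\lambda$ and agree for $\lambda>0$. It follows directly that $\fourierinv{\fourier{g_\lambda}}=g_\lambda$, and by translation and modulation the same identity holds for all shifted and modulated Gaussians.

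\textbf{Step 2 (Plancherel on $L^1\cap L^2$).} For $f\in L^1\cap L^2$, I would show $\norm{\fourier{f}}_2=\norm{f}_2$. Write $h=f*\tilde f$ with $\tilde f(x)=\conj{f(-x)}$, so that $\fourier{h}=|\fourier{f}|^2$ and $h$ is continuous at $0$ with $h(0)=\norm{f}_2^2$. Integrate $\fourier{h}$ against $\expo{-\pi\epsilon|k|^2}$, use Fubini and Step 1 to move the Gaussian to the $x$-side as an approximate identity, and let $\epsilon\to0$. Monotone convergence then gives $\int|\fourier{f}|^2=h(0)$.

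\textbf{Step 3 (extension and inversion).} By Step 2, $\mathcal F$ extends uniquely to an isometry of $L^2$, and so does $\fourierinv{\cdot}$, which is the same operator composed with reflection. Polarization gives $\inner{\fourier f,\fourier g}=\inner{f,g}$. Then $\fourierinv{\fourier{f}}=f$ holds on the span of Gaussians by Step 1. That span is dense in $L^2$: if $f\perp$ every modulated Gaussian, then $\fourier{(f g_1)}\equiv0$, which forces $f=0$. Since both sides are continuous in $L^2$, the identity extends to all of $L^2$. Finally, surjectivity follows from the same argument applied with the roles of $\mathcal F$ and $\mathcal F^{-1}$ swapped, which justifies the name ``inverse''.

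\textbf{Main obstacle.} The hard step is Step 2, specifically justifying $h(0)=\lim_{\epsilon\to0}\int\fourier{h}\,e^{-\pi\epsilon|k|^2}$. This requires continuity of $h=f*\tilde f$, which holds because translation is continuous in $L^2$, together with the approximate identity $\epsilon^{-3/2}e^{-\pi|x|^2/\epsilon}$. The density of Gaussians in Step 3 is the other delicate point. If the orthogonality argument proves awkward, I would instead use the Hermite-function basis or the Stone--Weierstrass theorem.
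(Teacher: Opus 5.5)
The paper gives no proof of this statement; it is quoted directly from \cite[Th.~5.5]{lieb2001analysis}. Your argument (Gaussian transform, Plancherel on $L^1\cap L^2$ via an approximate identity, isometric extension, then identification on a dense class) is the standard route of that source and is correct. Note that the step ``$\fourier{fg_1}\equiv 0$ forces $f=0$'' is already settled by your Step~2: since $fg_1\in L^1\cap L^2$, you get $\norm{fg_1}_2=\norm{\fourier{fg_1}}_2=0$, and $g_1>0$, so you need neither injectivity of the $L^1$ transform (usually a corollary of inversion) nor a Hermite or Stone--Weierstrass detour.
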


Let $f,g\in L^1(\R^3)$, the convolution of $f$ and $g$, denoted by $f*g$, is defined as follows

\begin{equation*}
    (f\ast g)(x)=\int_{\R^3} f(x-y)g(y)\, dy.
\end{equation*}
Most of the time, we will work with convolutions in $\mathbb{R}^3$ unless otherwise specified; for instance, we will denote by $f\ast_{(x,t)} g$ the convolution in $\mathbb{R}^4$.
One important property of the Fourier transform is that it converts differentiation and convolution into multiplication operations, indeed
\begin{theorem}{\cite[Th.\ 5.8]{lieb2001analysis}}\label{theorem-2}
    Let $f\in L^p(\R^3)$ and $g\in L^q(\R^3)$ and let $1+\tfrac{1}{r}=\frac{1}{p}+\frac{1}{q}$. Suppose that $1\leq p,q$ and $r\leq 2$, then
    
    \begin{equation}\label{eq:convolution}
        \fourier{f\ast g}(k)=\fourier{f}(k)\fourier{g}(k).
    \end{equation}    
\end{theorem}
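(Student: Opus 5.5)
The plan is to reduce the convolution identity to Fubini's theorem, using a density argument to move from the easy $L^1$ case to general exponents. I will first treat $f,g\in L^1(\R^3)$ (so $p=q=r=1$) and then extend by density and continuity to the remaining admissible triples $(p,q,r)$. Because the Fourier transform in \eqref{eq:Fourier} acts component-wise on the complex-valued coefficients, it is enough to work with scalar complex-valued $f,g$.

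For $f,g\in L^1(\R^3)$, Young's inequality gives $f\ast g\in L^1(\R^3)$ with $\|f\ast g\|_1\le \|f\|_1\|g\|_1$. The function $(x,y)\mapsto \exp(-2\pi i\inner{k,x})f(x-y)g(y)$ is absolutely integrable on $\R^6$, so Fubini's theorem applies. Writing $\exp(-2\pi i\inner{k,x})=\exp(-2\pi i\inner{k,x-y})\exp(-2\pi i\inner{k,y})$ and substituting $z=x-y$ in the inner integral, I get
\begin{equation*}
\fourier{f\ast g}(k)=\int_{\R^3}\int_{\R^3} e^{-2\pi i\inner{k,z}}f(z)\,dz\; e^{-2\pi i\inner{k,y}}g(y)\,dy=\fourier{f}(k)\fourier{g}(k),
\end{equation*}
which settles the $L^1$ case.

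For general $1\le p,q,r\le 2$ with $1+\tfrac1r=\tfrac1p+\tfrac1q$, I will approximate $f$ and $g$ by $f_n,g_n\in L^1\cap L^p$ and $L^1\cap L^q$ respectively, for instance compactly supported truncations or Schwartz functions. Young's inequality gives $f_n\ast g_n\to f\ast g$ in $L^r$. Since $r\le 2$, the Hausdorff--Young inequality $\|\fourier{h}\|_{r'}\le\|h\|_r$ implies $\fourier{f_n\ast g_n}\to\fourier{f\ast g}$ in $L^{r'}$. In the same way $\fourier{f_n}\to\fourier{f}$ in $L^{p'}$ and $\fourier{g_n}\to\fourier{g}$ in $L^{q'}$, so by Hölder's inequality the products converge in $L^{r'}$, because $\tfrac1{p'}+\tfrac1{q'}=\tfrac1{r'}$. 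Passing to an almost everywhere convergent subsequence then yields \eqref{eq:convolution} almost everywhere.

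I expect the main obstacle to be the density step. One has to check that $\fourier{\cdot}$ on $L^p$ with $1<p\le2$ is the continuous extension given by Hausdorff--Young, consistent with the $L^2$ definition in Theorem~\ref{theorem-1}, and that the exponent relation yields exactly the Hölder pairing above. Once that is in place, the algebraic core is just the Fubini computation.
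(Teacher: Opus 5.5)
The paper gives no proof of its own here; it simply quotes Theorem~5.8 of Lieb--Loss. Your argument is correct and is the standard proof of that theorem: Fubini in the $L^1$ case, then density via Young's inequality, Hausdorff--Young, and H\"older with $\frac{1}{p'}+\frac{1}{q'}=\frac{1}{r'}$.

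One small point: the paper's literal hypothesis ($1\le p,q$ and $r\le 2$) already forces $p,q\le 2$, since $\frac1p+\frac1q=1+\frac1r\ge\frac32$. So reading it as $1\le p,q,r\le 2$ loses no generality.
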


\begin{theorem}{\cite[Th. ~7.9]{lieb2001analysis}}\label{theorem-3}
    Let $f$ be an $L^2(\R^3)$ function with Fourier transform $\fourier{f}$. Then $f$ is in $W^{1,2}(\R^3)$ if and only if the function $k\to |k|\fourier{f}(k)$ is in $L^2(\R^3)$. If it is in $L^2(\R^3)$, then
    \begin{equation}\label{eq:action-gradient}
        \fourier{\nabla f}(k)=2\pi ik\fourier{f}(k),
    \end{equation}
    
    Moreover,
    
    \begin{equation*}
        \norm{f}^2_{W^{1,2}(\R^3)}=\int_{\R^3}|\fourier{f}(k)|^2(1+4\pi^2|k|^2)\,dk.
    \end{equation*}
\end{theorem}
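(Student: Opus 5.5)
The plan is to reduce everything to Plancherel's identity together with the elementary fact that on Schwartz functions the Fourier transform turns $\partial_j$ into multiplication by $2\pi i k_j$. We then transfer this identity to $L^2$ functions by duality, using the definition of the weak derivative. Throughout, $\mathcal{S}$ denotes the Schwartz class on $\R^3$. We take as known (these are standard facts from the same chapter of \cite{lieb2001analysis}) that $\mathcal{F}$ maps $\mathcal{S}$ bijectively onto itself and extends to a unitary map on $L^2(\R^3)$. Consequently Parseval's identity $\int_{\R^3} g\,\conj{h}\,dx=\int_{\R^3}\fourier{g}\,\conj{\fourier{h}}\,dk$ holds for $g,h\in L^2(\R^3)$. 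Here the bar denotes complex conjugation, and the statement is componentwise, so it suffices to treat complex-valued $f$.

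First step: for $\phi\in\mathcal{S}$, integrating by parts in \eqref{eq:Fourier} gives $\fourier{\partial_j\phi}(k)=2\pi i k_j\fourier{\phi}(k)$. The boundary terms vanish because of the rapid decay of $\phi$.

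Second step (necessity): suppose $f\in W^{1,2}(\R^3)$, so that $\partial_j f\in L^2$ and $\int \partial_j f\,\conj{\phi}=-\int f\,\conj{\partial_j\phi}$ for all $\phi\in\mathcal{S}$. Apply Parseval to both sides and use the first step:
\begin{equation*}
\int_{\R^3}\fourier{\partial_j f}\,\conj{\fourier{\phi}}\,dk=-\int_{\R^3}\fourier{f}\,\conj{2\pi i k_j\fourier{\phi}}\,dk=\int_{\R^3}2\pi i k_j\fourier{f}\,\conj{\fourier{\phi}}\,dk .
\end{equation*}
Since $\fourier{\phi}$ runs over all of $\mathcal{S}$, the two locally integrable functions $\fourier{\partial_j f}$ and $2\pi i k_j\fourier{f}$ agree as distributions, hence almost everywhere. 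This gives \eqref{eq:action-gradient}. As $\fourier{\partial_j f}\in L^2$, it also shows that $k\mapsto |k|\fourier{f}(k)$ lies in $L^2$.

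Third step (sufficiency): suppose $|k|\fourier{f}\in L^2$. Define $g_j:=\fourierinv{2\pi i k_j\fourier{f}}\in L^2(\R^3)$, using Theorem \ref{theorem-1} and unitarity. Reading the displayed chain of equalities backwards, with Parseval, gives $\int g_j\conj{\phi}=-\int f\,\conj{\partial_j\phi}$ for all $\phi\in\mathcal{S}$, and in particular for all $\phi\in C_c^\infty$. So $g_j$ is the weak derivative $\partial_j f$, and $f\in W^{1,2}$.

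Final step: the norm identity follows from Plancherel applied to $f$ and to each $\partial_j f$:
\begin{equation*}
\norm{f}^2_{L^2}+\sum_{j}\norm{\partial_j f}^2_{L^2}=\int|\fourier{f}|^2\,dk+\sum_j\int 4\pi^2k_j^2|\fourier{f}|^2\,dk .
\end{equation*}

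The only delicate point is the density/duality step. One must justify that testing against $\fourier{\phi}$ for $\phi\in\mathcal{S}$ determines an $L^1_{\mathrm{loc}}$ function. This is true because $C_c^\infty\subset\mathcal{S}=\mathcal{F}(\mathcal{S})$. One must also note that $k_j\fourier{f}\in L^1_{\mathrm{loc}}$ even before knowing it is in $L^2$, which holds because $\fourier{f}\in L^2$.
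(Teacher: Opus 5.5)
The paper gives no proof of this statement; it quotes \cite[Th.~7.9]{lieb2001analysis}. Your argument is the standard proof of that result and is correct: Parseval, the identity $\fourier{\partial_j\phi}=2\pi i k_j\fourier{\phi}$ on $\mathcal{S}$, identifying $\fourier{\partial_j f}$ with $2\pi i k_j\fourier{f}$ by testing, and the converse via $g_j=\fourierinv{2\pi i k_j\fourier{f}}$. The one step to make explicit is in the necessity direction, where the weak-derivative identity is only given for $\phi\in C_c^\infty$. It extends to $\phi\in\mathcal{S}$ because both sides are continuous in $\phi$ for the $W^{1,2}$ norm (as $f,\partial_j f\in L^2$), and because $\chi(\cdot/R)\phi\to\phi$ in $W^{1,2}$ as $R\to\infty$ for a cutoff $\chi\in C_c^\infty$ equal to $1$ near the origin.
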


It is worth mentioning that Theorems \ref{theorem-1}-\ref{theorem-3} are enunciated for $\R^3$, but indeed they are valid for $\R^n$ in general.

\section{The parabolic Dirac operator}\label{sec:parabolic-Dirac-operator}
In this section, we are going to analyze the following \textit{parabolic Dirac operators} in a biquaternionic framework

\begin{equation}\label{eq:parabolic-Dirac}
    D\pm i\partial_t,
\end{equation}
where $D$ is the Dirac operator defined previously in \eqref{eq:Dirac-operator} and $i$ is the imaginary unit in $\C$. Following \cite{kravchenko1998kernel, kravchenko2003applied}, we employ the following factorization of the wave operator in terms of the parabolic Dirac operators $D\pm i\partial_t$
\begin{equation}\label{eq:factorization-wave}
        -\Delta+\partial_{tt}=(D+i\partial_t)(D-i\partial_t)=(D-i\partial_t)(D+i\partial_t).
\end{equation}
The above factorization will be the cornerstone of our work. We proceed now to investigate the kernel of $D\pm i\partial_t$. Consider a function $w:\mathscr{C}\to \B$, which belongs to the kernel of the parabolic Dirac operator $D\pm i\partial_t$ in $\mathscr{C}$. As a consequence of the factorization \eqref{eq:factorization-wave}, we can easily see that $w$ is a solution of the wave equation \eqref{eq:wave-equation} component-wise in $\Omega$. As a consequence, it becomes important to have a complete characterization of $\text{Ker }(D\pm i\partial_t)$. From now on, we are going to use the convention for biquaternionic-valued functions \eqref{eq:real+imaginary}. Thus, $w\in \text{Ker }(D\pm i\partial_t)$ if and only if $w$ satisfies a parabolic div-curl system as follows

\begin{proposition}\label{equivalent:system:proposition}
    Let $w:\mathscr{C}\to\B$ be a $C^{1,1}(\mathscr{C},\mathbb{B})$ function. Then $(D\pm i\partial_t)w=0$ in $\mathscr{C}$ if and only if
    
    \begin{equation}\label{eq:div-curl-parabolic}
        \begin{array}{rlc}
                -\div \vec{u}&=&\pm\partial_t v_0,\\
                \nabla u_0+\curl \vec{u}&=&\pm\partial_t\vec{v},\\
                -\div \vec{v}&=&\mp\partial_t u_0,\\
                \nabla v_0+\curl \vec{v}&=&\mp\partial_t\vec{u},\\
        \end{array}
    \end{equation}
    
    where $w=u+iv=(u_0+\vec{u})+i(v_0+\vec{v})$.
\end{proposition}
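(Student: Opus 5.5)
The plan is a direct computation. We split $(D\pm i\partial_t)w$ into its real and imaginary quaternionic parts, and then into scalar and vector components. This works because the imaginary unit $i$ commutes with all quaternionic units $e_j$ and with $\partial_t$.

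\textbf{Applying the operator.} Write $w=u+iv$ with $u=u_0+\vec u$ and $v=v_0+\vec v$. By \eqref{eq:deco-div-curl}, $Dw=Du+iDv$. Since $i^2=-1$,
\begin{equation*}
 i\partial_t w = i\partial_t u - \partial_t v .
\end{equation*}
Hence
\begin{equation*}
 (D\pm i\partial_t)w=\bigl(Du\mp\partial_t v\bigr)+i\bigl(Dv\pm\partial_t u\bigr).
\end{equation*}
A biquaternion vanishes if and only if both of its quaternionic parts (real and imaginary) vanish. Therefore $(D\pm i\partial_t)w=0$ if and only if
\begin{equation*}
 Du=\pm\partial_t v \quad\text{and}\quad Dv=\mp\partial_t u .
\end{equation*}

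\textbf{Expanding with \eqref{eq:action-D}.} Substitute $Du=-\div\vec u+\nabla u_0+\curl\vec u$. Comparing scalar parts of $Du=\pm\partial_t v$ gives $-\div\vec u=\pm\partial_t v_0$. Comparing vector parts gives $\nabla u_0+\curl\vec u=\pm\partial_t\vec v$. The identical argument applied to $Dv=\mp\partial_t u$ yields the remaining two equations of \eqref{eq:div-curl-parabolic}.

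\textbf{Both directions.} Each step above is an equivalence, since a quaternion is zero exactly when its scalar and vector parts are zero. The $C^{1,1}$ hypothesis only guarantees that all derivatives exist pointwise, so nothing further is needed.

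The only point requiring care is sign bookkeeping, specifically the term $i\cdot i\,\partial_t v=-\partial_t v$, which produces the $\pm/\mp$ pattern across the four equations. There is no genuine obstacle.
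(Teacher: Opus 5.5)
Your proof is correct and matches the paper's argument: you separate real and imaginary quaternionic parts to get $Du\mp\partial_t v=0$ and $Dv\pm\partial_t u=0$, then split each equation into scalar and vector parts via \eqref{eq:action-D}. Your remark that every step is an equivalence, because $w=u+iv$ with $u,v$ $\mathbb{H}$-valued is unique, is a small clarification that the paper leaves implicit.
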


\begin{proof}
    Let $w:\mathscr{C}\to\B$ be a continuously differentiable function, then
    \begin{align*}
        (D\pm i\partial_t)w=0&\iff
        Du\mp \partial_tv+i(Dv\pm \partial_tu)=0.
    \end{align*}
    Therefore,
    \begin{equation}\label{eq:cauchy-style-equation}
        \left\{
            \begin{array}{rcl}
                Du\mp \partial_tv&=&0,\\
                Dv\pm \partial_tu&=&0.
            \end{array}
        \right.
    \end{equation}
    Using the action \eqref{eq:action-D} of the Dirac operator $D$ and separating the scalar and the vector parts, we can directly verify that \eqref{eq:cauchy-style-equation} and \eqref{eq:div-curl-parabolic} are equivalent systems.
\end{proof}    

Given the above equations, we can see a certain similarity with the Cauchy-Riemann equations, so we want to generalize the harmonic conjugation process. Based on the equivalence between $\text{Ker }(D\pm i\partial_t)$ and system \eqref{eq:cauchy-style-equation} and the properties of the Teodorescu operator $T_{\Omega}$ defined in \eqref{eq:Teodorescu}, we are able to construct the imaginary part of solutions in the kernel of $D\pm i\partial_t$ when the real part is known, and it is a solution of the wave equation \eqref{eq:wave-equation}. That is, 

\begin{proposition}\label{prop:completation}
    Let $u:\mathscr{C}\to\H$ be a $C^{2,2}(\mathscr{C},\mathbb{H})$ function and $u\in\text{Ker }(-\Delta+\partial_{tt}).$ Then we can construct $v:\mathscr{C}\to\H$ such that $u\pm iv\in\text{Ker }(D\pm i\partial_t)$ as follows
    \begin{equation}\label{Riemann:Cauchy:Style:equation}
        v(x,t):=\int_0^t Du(x,s)\,ds-T_\Omega[\partial_t u](x,0)+\nabla\phi(x),
    \end{equation}
    where $T_{\Omega}$ is the Teodorescu operator and $\phi$ is any harmonic function independent of time.
\end{proposition}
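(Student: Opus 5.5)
The plan is to reduce everything to the first-order system \eqref{eq:cauchy-style-equation} from the proof of Proposition \ref{equivalent:system:proposition}. First I expand the operator on $w=u\pm iv$, with the sign in $w$ matched to the sign in the operator. Using $i^2=-1$ and $(\pm)(\pm)=+$, we get
\begin{equation*}
(D\pm i\partial_t)(u\pm iv)=Du\pm iDv\pm i\partial_t u-\partial_t v=(Du-\partial_t v)\pm i\,(Dv+\partial_t u).
\end{equation*}
So for both choices of sign it suffices to show that the single quaternionic function $v$ in \eqref{Riemann:Cauchy:Style:equation} satisfies
\begin{equation*}
\partial_t v=Du,\qquad Dv=-\partial_t u \quad \text{in } \mathscr{C}.
\end{equation*}

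The first identity is immediate. The terms $T_\Omega[\partial_t u](x,0)$ and $\nabla\phi(x)$ do not depend on $t$. By the fundamental theorem of calculus, $\partial_t\int_0^t Du(x,s)\,ds=Du(x,t)$, and this is legitimate because $Du$ is continuous since $u\in C^{2,2}$.

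For the second identity I apply $D$ term by term.

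\textbf{Integral term.} Since $u\in C^{2,2}$, differentiation under the integral sign is allowed. Using $D^2=-\Delta$ together with the wave equation $\Delta u=\partial_{ss}u$, we get
\begin{equation*}
D\int_0^t Du\,ds=-\int_0^t\Delta u\,ds=-\int_0^t\partial_{ss}u\,ds=-\partial_t u(x,t)+\partial_t u(x,0).
\end{equation*}

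\textbf{Teodorescu term.} Since $T_\Omega$ is a right inverse of $D$ on $L^p(\Omega,\B)$, we have $D\,T_\Omega[\partial_t u](x,0)=\partial_t u(x,0)$. This exactly cancels the boundary term produced above.

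\textbf{Gradient term.} By \eqref{eq:action-D} applied to the purely vectorial function $\nabla\phi$,
\begin{equation*}
D\nabla\phi=-\div\nabla\phi+\curl\nabla\phi=-\Delta\phi+0=0,
\end{equation*}
because $\phi$ is harmonic. This also explains why this term is the free "constant" in the construction.

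Adding the three contributions gives $Dv=-\partial_t u$, which finishes the argument.

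The main obstacle is justifying the regularity at $t=0$. The statement only assumes $u\in C^{2,2}(\mathscr{C})$ on the open cylinder, yet the formula uses $\partial_t u(\cdot,0)$ both as an endpoint in $\int_0^t\partial_{ss}u\,ds$ and as an element of $L^p(\Omega,\B)$ with $p>1$, so that the right-inverse property of $T_\Omega$ applies. I would handle this by assuming, implicitly in the statement, that $u$ and its first derivatives extend continuously to $\overline{\Omega}\times[0,\infty)$. Then $\partial_t u(\cdot,0)$ is bounded on the bounded domain $\Omega$ and hence lies in every $L^p(\Omega)$. If one wants to avoid this assumption, the alternative is to replace the base point $0$ by any fixed $t_0>0$ throughout. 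The same computation goes through verbatim and requires only interior regularity.
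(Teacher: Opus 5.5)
Your proof is correct and follows the paper's argument: reduce via \eqref{eq:cauchy-style-equation} to $\partial_t v=Du$ and $Dv=-\partial_t u$. The first identity follows from the fundamental theorem of calculus, and the second from $D^2=-\Delta$, the wave equation, $DT_\Omega=I$, and $D\nabla\phi=-\Delta\phi=0$; your sign bookkeeping for $u\pm iv$ is cleaner than the paper's. Your caveat that $t=0$ lies outside $\mathscr{C}$ is a fair point the paper ignores, but the $t_0>0$ variant still needs $\partial_t u(\cdot,t_0)\in L^p(\Omega)$, which continuity on the open set $\Omega$ alone does not guarantee.
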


\begin{proof}
    Let $u$ be a solution for the wave equation \eqref{eq:wave-equation}. By the proof of Proposition \ref{equivalent:system:proposition} we only need to verify that \eqref{eq:cauchy-style-equation} holds, then
    \begin{align*}
        \pm\partial_tv(x,t)&=\partial_t\left(\int_0^tDu(x,s)\, ds-T_\Omega[\partial_t u](x,0)+\nabla\phi(x)\right)\\
        &= Du(x,t).
    \end{align*}
    Now, using that $DT_{\Omega}=I$ in $\Omega$, we get that
    \begin{align*}
        Dv(x,t)&=\pm D\left(\int_0^tDu(x,s)\, ds-T_\Omega[\partial_tu](x,0)+\nabla\phi(x)\right)\\
        &=\pm \left(\int_0^tD^2 u(x,s)\, ds-DT_\Omega[\partial_tu](x,0)-\Delta \phi(x)\right)\\
        &=\pm \left(\int_0^t-\Delta u(x,s)\,ds-\partial_tu(x,0)\right)\\
        &=\pm \left(\int_0^t-\partial_s^2u(x,s)\,ds-\partial_tu(x,0)\right)\\
        &=\pm(\partial_tu(x,0)-\partial_tu(x,t)-\partial_tu(x,0))\\
        &=\mp\partial_tu(x,t).
    \end{align*}
    Therefore, $w:=u\pm iv$ is a solution of $(D\pm i\partial_t)w=0$ as we desired.
\end{proof}

\begin{example}
    Let $u(x,t)=x+t$, then $(-\Delta+\partial_{tt})u=0$.
    Using that $T_{\Omega}[1]=-\frac{1}{3}x$ (see \cite[Appendix]{gurlebeck1997quaternionic}) and by \eqref{Riemann:Cauchy:Style:equation}, then we can calculate a parabolic metaharmonic conjugates of $u$, namely $v$ as follows
    \begin{align*}
        v(x,t)&= \int_0^t Du(x,s)\, ds-T_\Omega[\partial_t u](x,0)+\nabla \phi(x)\\
        &=\int_0^t -3\,ds-T_\Omega[1](x,0)+\nabla \varphi(x)\\
        &= \left(-3t-T_\Omega[1](x,0)+\nabla \varphi(x)\right)\\
        &= \left(-3t+\frac{1}{3}x+\nabla \varphi(x)\right).
    \end{align*}
    Therefore,
    \[ w(x,t)=x+t\pm i\left(\frac{1}{3}x-3t+\nabla \varphi(x)\right)\in \text{Ker }(D\pm i\partial_t),\]
where $\varphi$ is an arbitrary harmonic function in $\Omega$.
\end{example}

Let $u_0\in C^{2,2}(\mathscr{C},\mathbb{C})$, let us define the following operator 
\begin{equation}\label{eq:operator-comp}
    \mathscr{U}[u_0](x,t):=i\int_0^t \nabla u_0(x,s)\, ds-i T_\Omega[\partial_t u_0](x,0).
\end{equation}
Observe that $\mathscr{U}[u_0]\in C^{1,1}(\mathscr{C}, \mathbb{C}^3)$ is purely vectorial whenever $u\in C^{2,2}(\mathscr{C},\mathbb{C})$. Moreover, if $u_0$ is a scalar solution of the wave equation \eqref{eq:wave-equation}, then $u_0\pm \mathscr{U}[u_0]$ belongs to the kernel of the parabolic Dirac operator \eqref{eq:parabolic-Dirac}. Indeed,
\begin{proposition}\label{prop:harmonic-conjugates}
    Let $u_0:\mathscr{C}\to\C$ in $C^{2,2}(\mathscr{C},\mathbb{C})$. If $u_0\in\text{Ker }(-\Delta+\partial_{tt}),$ then $u_0\pm\mathscr{U}[u_0]\in\text{Ker }(D\pm i\partial_t)$. Moreover, for any scalar harmonic function independent of time $h$, we have that $u_0\pm\mathscr{U}[u_0]+\nabla h\in\text{Ker}(D\pm i\partial_t)$.
\end{proposition}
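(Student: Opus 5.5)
The plan is a direct verification, in the spirit of the proof of Proposition \ref{prop:completation}, treating both signs at once. Write $s\in\{+1,-1\}$ for the sign, so the claim is $(D+s\,i\partial_t)\bigl(u_0+s\,\mathscr{U}[u_0]\bigr)=0$ in $\mathscr{C}$. By linearity I would expand this into four terms:
\begin{equation*}
Du_0 \;+\; s\,D\mathscr{U}[u_0] \;+\; s\,i\,\partial_t u_0 \;+\; s^2\, i\,\partial_t\mathscr{U}[u_0].
\end{equation*}

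First I would compute each piece.
\begin{itemize}
\item Since $u_0$ is scalar, $Du_0=\nabla u_0$ by \eqref{eq:action-D}.
\item By the fundamental theorem of calculus, $\partial_t\mathscr{U}[u_0](x,t)=i\nabla u_0(x,t)$, because the term $T_\Omega[\partial_t u_0](x,0)$ does not depend on $t$. Hence the last term is $i\cdot i\nabla u_0=-\nabla u_0$, which cancels $Du_0$.
\item What remains is $s\bigl(D\mathscr{U}[u_0]+i\partial_t u_0\bigr)$, so the heart of the matter is the identity $D\mathscr{U}[u_0]=-i\partial_t u_0$.
\end{itemize}

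To prove that identity, I would differentiate under the integral sign, which is justified by $u_0\in C^{2,2}(\mathscr{C},\mathbb{C})$. I would also use $D\nabla u_0=D^2u_0=-\Delta u_0$ and the right-inverse property $DT_\Omega=I$ in $\Omega$ recalled in the preliminaries. This gives
\begin{equation*}
D\mathscr{U}[u_0](x,t)=-i\int_0^t\Delta u_0(x,s)\,ds-i\,\partial_t u_0(x,0).
\end{equation*}
Then I would use the wave equation $\Delta u_0=\partial_{ss}u_0$ and integrate in $s$. The integral becomes $\partial_t u_0(x,t)-\partial_t u_0(x,0)$, and the boundary term at $s=0$ cancels exactly against the Teodorescu term, leaving $-i\partial_t u_0(x,t)$, as required.

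For the second statement, $h$ is harmonic and independent of time. So $D\nabla h=-\Delta h=0$ and $\partial_t\nabla h=0$, giving $(D\pm i\partial_t)\nabla h=0$, and linearity finishes the proof. The only step needing real care is justifying $DT_\Omega[\partial_t u_0(\cdot,0)]=\partial_t u_0(\cdot,0)$. For this I would note that $\partial_t u_0(\cdot,0)$ should lie in $L^p(\Omega)$ for some $p>1$, for instance by assuming $u_0$ extends in $C^{2,2}$ up to $t=0$ on the bounded domain $\Omega$. This is the same implicit assumption already made in Proposition \ref{prop:completation}.
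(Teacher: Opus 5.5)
Your proposal is correct and takes essentially the same route as the paper's proof: a direct computation in which $\partial_t\mathscr{U}[u_0]=i\nabla u_0$ cancels $Du_0=\nabla u_0$, while $D\mathscr{U}[u_0]=-i\partial_t u_0$ follows from $D\nabla=-\Delta$, $DT_\Omega=I$, the wave equation and the fundamental theorem of calculus. Your treatment of the $\nabla h$ term, and your remark that $u_0$ must extend to $t=0$ with $\partial_t u_0(\cdot,0)\in L^p(\Omega)$ for some $p>1$, are small additions that the paper leaves implicit.
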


\begin{proof} The proof is based on a direct computation
    \begin{align*}
        (D\pm i\partial_t)\left(u_0\pm \mathscr{U}[u_0]\right)(x,t)&=(D\pm i\partial_t)\left(u_0(x,t)\pm i\left[\int_0^t\nabla u_0(x,s)\, ds-T_\Omega[\partial_tu_0](x,0)\right]\right)\\
        &=\nabla u_0\pm i\int_0^t-\Delta u_0(x,s)\, ds\mp i \partial_tu_0(x,0)\pm i\partial_tu_0(x,t) - \nabla u_0\\
        &=\pm i\int_0^t-\partial_s^2 u_0(x,s)\, ds\mp i \partial_tu_0(x,0)\pm i\partial_tu_0(x,t)\\
        &=0.
    \end{align*}
\end{proof}
\begin{example}
Letting $u_0(x,t)=x_1^2+x_2^2+x_3^2+3t^2$ in Proposition \ref{prop:harmonic-conjugates}, then $u_0$ is a solution of the wave equation \eqref{eq:wave-equation}. We can compute $\mathscr{U}[u_0](x,t)$ as follows  
\begin{align*}
        \mathscr{U}[u_0](x,t)&=i\left[2\int_0^t  x \,ds-6T_\Omega[t](x,0)\right]=2itx.
\end{align*}
Consequently,
    \[w(x,t)=u_0\pm\mathscr{U}[u_0]=x_1^2+x_2^2+x_3^2+3t^2 \pm 2itx\in \text{Ker }(D\pm i\partial_t).\]
\end{example}

\section{Inverse of the parabolic Dirac operator $D\pm i\partial_t$.}\label{sec:inverse_dirac_operator}
One advantage of the factorization \eqref{eq:factorization-wave} is that fundamental solutions of the first-order operators $D\pm i\partial_t$ can be derived directly from a fundamental solution of the wave operator. However, after an exhaustive review of the literature, we opt against using the classical fundamental solutions of the wave operator; instead, we propose to work with a full characterization of the image of a fundamental solution of $D\pm i\partial_t$ under the classical Fourier transform, and we find an unexpected relation with the quaternionic exponential function.

Let us consider $\Phi_{\pm}\in L^1(\R^3\times(0,\infty))$ be a fundamental solution of the parabolic Dirac operator $D\pm i\partial_t$, i.e.
\begin{equation}\label{eq:homogeneous}
    (D\pm i\partial_t)\Phi_\pm(x,t)=0, \quad x\ne 0,\,  t\ne 0.
\end{equation}
Applying the Fourier transform defined in \eqref{eq:Fourier-space-time} to \eqref{eq:homogeneous} and using the property \eqref{eq:action-gradient}, we get that
\begin{equation}\label{eq:Fourier-exponential}
    2\pi ik\fourier{\Phi_\pm}(k,t)\pm i\partial_t\fourier{\Phi_\pm}(k,t)=0.
\end{equation}
Using that $\overline{k}=-k$, it is easy to verify that a solution of \eqref{eq:Fourier-exponential} is
\begin{equation}\label{eq:fundamental}
    \fourier{\Phi_\pm}(k,t)=\expo{\mp 2\pi kt}=\cos\left(2\pi|k|t\right)\mp\frac{k}{|k|}\sin\left(2\pi|k|t\right),
\end{equation}
where the right-hand side is a quaternionic-valued exponential function defined for all $(x,t)\in \mathbb{R}^3\times \mathbb{R}$, see for instance \cite[Theorem~11.19]{GuHaSpr2008}.
In particular, we consider $t>0$, we additionally suppose that $\fourier{\Phi_\pm}(k,t)=0$ for all $t<0$.

So the next natural step is to find $\fourierinv{\fourier{\Phi_\pm}}$, but this can be rather difficult, so instead we should use the properties \eqref{eq:definition-Fourier-inverse}-\eqref{eq:convolution} of the Fourier transform and the complete characterization of $\fourier{\Phi_{\pm}}$ given in \eqref{eq:fundamental}. Observe that
\begin{align*}
(\Phi_\pm*_{(x,t)}w)(x,t)&=\int_{\R}\int_{\R^3}\Phi_\pm(x-y,t-s)w(y,s)\, dy ds\\
    &=\int_{\R}\int_{\R^3}f_s(x-y)g_s(y)\, dy ds\\
    &=\int_{\R} (f_s*g_s)(x)\, ds\\
    &=\int_{\R}\fourierinv{\fourier{f_s*g_s}}(x)\, ds\\
    &=\int_{\R}\fourierinv{\fourier{f_s}\fourier{g_s}}(x)\, ds\\
    &=\int_{\R}\int_{\R^3}\expo{2\pi i\inner{x,k}}\fourier{f_s}(k)\fourier{g_s}(k)\, dkds.\\
    &=\int_0^t\int_{\R^3}\expo{2\pi i\inner{x,k}}\fourier{\Phi_\pm}(k,t-s)\fourier{w}(k,s)\, dkds.\\
    &=\int_0^t\int_{\R^3}\expo{2\pi i\inner{x,k}}\expo{\mp 2\pi k(t-s)}\fourier{w}(k,s)\, dkds,
\end{align*}
where $f_s(x)=\Phi_\pm(x,t-s)$ and $g_s(x)=w(x,s).$ 

After the above discussion, we define the \textit{parabolic Teodorescu transform} acting on integrable biquaternionic functions $w \colon \mathbb{R}^3\times(0,\infty) \rightarrow \mathbb{B}$ as follows

\begin{align}\label{eq:parabolic-Teodorescu}
T_{\mathscr{C},\pm}[w](x,t):=\int_0^t\int_{\R^3}\expo{2\pi i\inner{x,k}}\expo{\mp 2\pi k(t-s)}\fourier{w}(k,s)\, dk\, ds, 
\end{align}
for all $(x,t)\in \mathbb{R}^3\times(0,\infty)$. From now on, when we work with the parabolic Teodorescu transform $T_{\mathscr{C},\pm}$ acting on functions in $L^2(\mathscr{C},\B)$, we will extend by zero on $(\mathbb{R}^3\setminus \overline{\Omega}) \times (0,\infty)$. Moreover, 
\begin{proposition}\label{prop:well-defined}
    Let $w\in L^2(\mathscr{C},\B)$. Then $T_{\mathscr{C},\pm}[w](x,t)$ is well defined for almost all $(x,t)\in \mathbb{R}^3\times(0,\infty)$.
\end{proposition}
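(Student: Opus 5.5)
Because the spatial integral in \eqref{eq:parabolic-Teodorescu} is an oscillatory integral of an $L^2$ function, the plan is to read the inner integral as an inverse Fourier transform in $L^2(\R^3)$ rather than as a pointwise Lebesgue integral. The outer time integral is then controlled by Fubini/Minkowski on each bounded time interval $(0,T)$. The key structural fact is that the quaternionic exponential multiplier is bounded. From \eqref{eq:fundamental}, each biquaternionic component of $\expo{\mp 2\pi k(t-s)}=\cos(2\pi|k|(t-s))\mp\frac{k}{|k|}\sin(2\pi|k|(t-s))$ has modulus at most $1$ for all $k$ and all $t,s$. Left multiplication by this quaternion therefore changes $L^2$ norms of biquaternion-valued functions by at most a fixed constant. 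In fact it is an isometry for the natural Euclidean norm on $\B\cong\C^4$, since a unit real quaternion times a complex-coefficient quaternion preserves the norm.

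\textbf{Step 1.} Extend $w$ by zero outside $\Omega$ and fix $T>0$. Since $w\in L^2(\Omega\times(0,T))$, Fubini gives $w(\cdot,s)\in L^2(\R^3,\B)$ for a.e.\ $s$, with $\int_0^T\|w(\cdot,s)\|_{L^2}^2ds<\infty$. Because $\Omega$ is bounded, also $w(\cdot,s)\in L^1(\R^3)$ with $\|w(\cdot,s)\|_{L^1}\le|\Omega|^{1/2}\|w(\cdot,s)\|_{L^2}$, so $\fourier{w}(k,s)$ is defined pointwise by \eqref{eq:Fourier-space-time}. By Plancherel (Theorem \ref{theorem-1}) it satisfies $\|\fourier{w}(\cdot,s)\|_{L^2}=\|w(\cdot,s)\|_{L^2}$.

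\textbf{Step 2.} For a.e.\ $s<t$, set $G_{t,s}(k)=\expo{\mp2\pi k(t-s)}\fourier{w}(k,s)$. By the bound above, $\|G_{t,s}\|_{L^2}\le C\|w(\cdot,s)\|_{L^2}$. Hence $g_{t,s}:=\fourierinv{G_{t,s}}$, which is the inner $k$-integral, exists in $L^2(\R^3)$ in the $L^2$-limit sense, with the same bound. Joint measurability of $G_{t,s}(k)$ in $(k,s)$ follows from continuity of the multiplier and measurability of $\fourier{w}$.

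\textbf{Step 3.} Define $T_{\mathscr{C},\pm}[w](\cdot,t)=\int_0^t g_{t,s}\,ds$ as an $L^2(\R^3)$-valued (Bochner) integral. By Minkowski's integral inequality and Cauchy--Schwarz,
\begin{equation*}
\|T_{\mathscr{C},\pm}[w](\cdot,t)\|_{L^2(\R^3)}\le C\int_0^t\|w(\cdot,s)\|_{L^2}\,ds\le C\,t^{1/2}\|w\|_{L^2(\mathscr{C})}<\infty .
\end{equation*}
Integrating the square of this bound over $t\in(0,T)$ shows $T_{\mathscr{C},\pm}[w]\in L^2(\R^3\times(0,T))$. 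A function in $L^2$ is finite and well defined almost everywhere, and letting $T\to\infty$ through the integers gives the claim for a.e.\ $(x,t)\in\R^3\times(0,\infty)$.

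\textbf{Main obstacle.} The hardest step is justifying that the inner integral makes sense pointwise, because $\fourier{w}(\cdot,s)$ need not be in $L^1(\R^3)$. I would handle this as above, interpreting the $dk$ integral as an $L^2$ inverse Fourier transform, which is meaningful only up to null sets. That is exactly why the statement asserts ``almost all'' $(x,t)$. A second point needing care is measurability in $(x,t,s)$ so that Fubini applies. I would treat this by first proving everything for $w$ in a dense class (smooth, compactly supported in $\mathscr{C}$), where the integrals converge absolutely, and then extending by the $L^2$ bound of Step 3 through a limiting argument.
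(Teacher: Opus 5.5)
Your proposal is correct and takes essentially the same route as the paper. Both arguments use that $\exp(\mp 2\pi k(t-s))$ is a unit-quaternion multiplier, combine it with Plancherel, and read the $dk$-integral as an $L^2$ inverse Fourier transform. The paper integrates in $s$ on the Fourier side before inverting, while you invert slice-wise and Bochner-integrate; these agree because $\mathcal{F}^{-1}$ is bounded on $L^2$.

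Your version is tighter in two places:
\begin{itemize}
\item Your Minkowski/Cauchy--Schwarz bound replaces the paper's unjustified appeal to continuity of $s\mapsto\|w(\cdot,s)\|$, which fails for general $L^2$ data and is not needed.
\item Your integration over $t$ gives joint $L^2$ integrability in $(x,t)$ (modulo the measurability check you flag), and hence the joint almost-everywhere statement that the paper only asserts.
\end{itemize}
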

\begin{proof}
If $w\in L^2(\mathscr{C},\B)$, then $w(\cdot,t_0)\in L^1(\Omega, \mathbb{B})$ and extending by zero on $\mathbb{R}^3\setminus \overline{\Omega}$, then $\fourier{w}$ is well-defined. And now given $t>0$ then for all $0<s<t$ we have the following

    \begin{align*}
        \int_0^t\int_{\R^3}|\expo{\mp 2\pi (t-s)k}\fourier{w}(k,s)|^2\, dk\, ds&\leq\int_0^t\int_{\R^3}|\fourier{w}(k,s)|^2\, dk\, ds\\
        &=\int_0^t\norm{\fourier{w}(\cdot,s)}_{L^2(\mathbb{R}^3)}^2\, ds\\
        &=\int_0^t\norm{{w}(\cdot,s)}_{L^2(\mathbb{R}^3)}^2,
    \end{align*}
where the last equality comes from Plancherel's theorem. But given that $\norm{\cdot}$ is continuous, then is bounded over $[0,t]$. Therefore, $\int_0^t\expo{\mp 2\pi (t-s)k}\fourier{w}(k,s)\, ds$ is in $L^2(\Omega,\B)$ meaning it has a Fourier inverse. Thus,
 \begin{align*}
        &\fourierinv{\int_0^t\expo{\mp2\pi(t-s)}\fourier{w}(k,s)\,ds}(x)\\
        &=\int_{\R^3}\expo{2\pi i\inner{x,k}}\int_0^t\expo{\mp 2\pi(t-s)k}\fourier{w}(k,s)\,ds\, dk\\
        &=\int_0^t\int_{\R^3}\expo{2\pi i\inner{x,k}}\expo{\mp 2\pi(t-s)k}\fourier{w}(k,s)\, dk\, ds.\\
        &=T_{\mathscr{C},\pm}[w](x,t),
    \end{align*}
is well-defined for almost all $(x,t)\in \mathbb{R}^3\times(0,\infty)$ as we desired.
\end{proof}
Now, we enunciate the main result of this Section, which establishes that up to a multiplicative factor, the operator $T_{\mathscr{C},\pm}$ defined in \eqref{eq:parabolic-Teodorescu} is a right inverse operator of the parabolic Dirac operator $D\pm i\partial_t$.
\begin{theorem}\label{th:right-inverse}
    Let $w\in L^2(\mathscr{C},\B)$ then the operator $T_{\mathscr{C},\pm}[-i\text{ } \cdot]$ is a right inverse of $D\pm i\partial_t$. That is,
    \begin{equation}\label{eq:inverse-parabolic}
        (D\pm i\partial_t) T_{\mathscr{C},\pm}[\mp i \,w](x,t)= w(x,t), \qquad\forall (x,t)\in\R^3\times (0,\infty).
    \end{equation}
\end{theorem}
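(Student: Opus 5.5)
The plan is to verify \eqref{eq:inverse-parabolic} on the Fourier side in the space variable, where $D$ becomes left multiplication and $\partial_t$ stays a time derivative. Set $g:=\mp i\,w$ and $G:=T_{\mathscr{C},\pm}[g]$. By the proof of Proposition \ref{prop:well-defined} (Fubini plus Theorem \ref{theorem-1}), for a.e. $t$ we have
\begin{equation*}
    \fourier{G}(k,t)=\int_0^t \expo{\mp 2\pi k(t-s)}\fourier{g}(k,s)\,ds .
\end{equation*}
So it suffices to show that $\fourier{(D\pm i\partial_t)G}(k,t)=\fourier{w}(k,t)$ and then apply $\mathcal{F}^{-1}$ via \eqref{eq:inversos}.

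First I compute the action of $D$. Applying \eqref{eq:action-gradient} componentwise to $D=\sum e_j\partial_j$ gives $\fourier{DG}(k,t)=2\pi i\,k\,\fourier{G}(k,t)$. Here $k=\sum k_je_j$ is a purely vectorial quaternion acting on the left, and the complex unit $i$ is central in $\B$.

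Second I compute the action of $\partial_t$, using the Leibniz rule in the variable $t$, which appears both as the upper limit and inside the kernel. The boundary term is $\expo{0}\fourier{g}(k,t)=\fourier{g}(k,t)$. For the integrand I use $\partial_t\expo{\mp 2\pi k(t-s)}=\mp 2\pi k\,\expo{\mp 2\pi k(t-s)}$. This holds because the exponential in \eqref{eq:fundamental} is a power series in the single quaternion $k$, with real coefficients depending on $t$, so $k$ commutes with it. It is exactly the identity already used in \eqref{eq:Fourier-exponential}. Hence
\begin{equation*}
    \partial_t\fourier{G}(k,t)=\fourier{g}(k,t)\mp 2\pi k\,\fourier{G}(k,t).
\end{equation*}
Combining the two steps,
\begin{equation*}
    \fourier{(D\pm i\partial_t)G}=2\pi i k\fourier{G}\pm i\fourier{g}-2\pi i k\fourier{G}=\pm i\,\fourier{g}=(\pm i)(\mp i)\fourier{w}=\fourier{w},
\end{equation*}
since $(\pm i)(\mp i)=-i^2\cdot(\pm1)(\pm1)=1$ in either sign choice. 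Taking $\mathcal{F}^{-1}$ in $x$ then gives \eqref{eq:inverse-parabolic}.

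The main obstacle is justification rather than algebra, and I will handle it as follows.
\begin{itemize}
\item For the time derivative, differentiating under the $s$-integral needs $s\mapsto\fourier{g}(k,s)$ to be continuous at $s=t$, or else the identity must be read distributionally. I would first prove the formula for $w$ in a dense class, such as smooth functions compactly supported in $\mathscr{C}$, where every step is classical.
\item For the spatial derivative, using \eqref{eq:action-gradient} requires $|k|\fourier{G}(k,t)\in L^2$. That need not hold for general $w$, so in general I would interpret $DG$ weakly.
\item To pass from the dense class to all of $L^2(\mathscr{C},\B)$, I would use the bound $|\expo{\mp2\pi k(t-s)}|\le 1$ and Plancherel, as in the proof of Proposition \ref{prop:well-defined}, to pass to the limit in the sense of distributions. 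The conclusion then holds for a.e. $(x,t)$, which is how the ``for all'' in the statement should be understood.
\end{itemize}
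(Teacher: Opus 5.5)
Your proposal is correct and is essentially the paper's proof. The paper differentiates the integral defining $T_{\mathscr{C},\pm}[w]$ under the integral sign: $D$ contributes the factor $2\pi i k$, and the Leibniz rule with $\partial_t\exp(\mp 2\pi k(t-s))=\mp 2\pi k\exp(\mp 2\pi k(t-s))$ gives $w(x,t)\pm i\,DT_{\mathscr{C},\pm}[w]$, hence $(D\pm i\partial_t)T_{\mathscr{C},\pm}[w]=\pm i w$, which is your Fourier-side identity read back in $x$. Your density/distributional argument and your reading of ``for all $(x,t)$'' as ``almost every'' add justification the paper omits (it differentiates under the integral sign without comment), but the core argument is the same.
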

\begin{proof}
First, by convention, we will extend $w$ by zero on $\mathbb{R}^3\setminus \overline{\Omega}$. Notice that
    \begin{align*}
        \partial_jT_{\mathscr{C},\pm}[w](x,t)&=\partial_j\int_0^t\int_{\R^3}\expo{2\pi i\inner{x,k}}\expo{\mp2\pi(t-s)k}\fourier{w}(k,s)\, dk\, ds\\
        &=\int_0^t\int_{\R^3}2\pi ik_j\expo{2\pi i\inner{x,k}}\expo{\mp 2\pi(t-s)k}\fourier{w}(k,s)\, dk\, ds.
    \end{align*}
    Then,
    \[DT_{\mathscr{C},\pm}[w](x,t)=\int_0^t\int_{\R^3}2\pi ik\expo{2\pi i\inner{x,k}}\expo{\mp 2\pi(t-s)k}\fourier{w}(k,s)\, dk\,ds.\]
    We now compute the partial derivative with respect to $t$, applying the Leibniz integral rule.
    \begin{align*}
        \partial_t T_{\mathscr{C},\pm}[w](x,t)&=\partial_t\int_0^t\int_{\R^3}\expo{2\pi i\inner{x,k}}\expo{\mp 2\pi(t-s)k}\fourier{w}(k,s)\,dk\,ds\\
        &=\int_{\R^3}\expo{2\pi i\inner{x,k}}\fourier{w}(k,t)\, dk\\   &+\int_0^t\int_{\R^3}\partial_t\left[\expo{2\pi i\inner{x,k}}\expo{\mp 2\pi(t-s)k}\right]\fourier{w}(k,s)\, dk\,ds\\
        &=
        w(x,t)\mp \int_0^t\int_{\R^3}2\pi k\expo{2\pi i\inner{x,k}}\expo{\mp 2\pi(t-s)k}\fourier{w}(k,s)\,dk\, ds\\
        &=w(x,t)\pm iD T_{\mathscr{C},\pm}[w](x,t)(x,t).
    \end{align*}
    Therefore, we can easily verify that 
    $$(D\pm i\partial_t)T_{\mathscr{C},\pm}[w](x,t)=\pm iw(x,t), \qquad \forall (x,t)\in \R^3\times (0,\infty),$$
which in turn implies \eqref{eq:inverse-parabolic} as we desired.
\end{proof}
A straightforward consequence of Theorem \ref{th:right-inverse} and of the factorization \eqref{eq:factorization-wave} is 
\begin{corollary}\label{cor:compatibilty}
Under the same hypothesis than Theorem \ref{th:right-inverse}. The scalar part of $T_{\mathscr{C},\pm}[{\mp iw}]$ is solution of the wave equation \eqref{eq:wave-equation} if and only if 
\begin{align}\label{eq:compatibility-condition}
\div \vec w\pm i\partial_tw_0=0. 
\end{align}
\end{corollary}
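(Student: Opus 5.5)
Set $W:=T_{\mathscr{C},\pm}[\mp i w]$ and let $W_0=\Sc W$. The plan is to compute $(-\Delta+\partial_{tt})W_0$ directly and read off when it vanishes. By Theorem \ref{th:right-inverse} we have $(D\pm i\partial_t)W=w$. Applying the other factor from \eqref{eq:factorization-wave} then gives
\begin{equation*}
(-\Delta+\partial_{tt})W=(D\mp i\partial_t)(D\pm i\partial_t)W=(D\mp i\partial_t)w .
\end{equation*}
This step uses that $D$ and $\partial_t$ commute and that $i$ is central in $\B$. Both facts hold because $i$ is the complex unit and commutes with every $e_j$. Since the wave operator acts componentwise, taking scalar parts yields $(-\Delta+\partial_{tt})W_0=\Sc\big((D\mp i\partial_t)w\big)$.

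Next we compute that scalar part. Write $w=w_0+\vec w$ with complex-valued $w_0$ and $\mathbb{C}^3$-valued $\vec w$. By \eqref{eq:action-D}, extended $\C$-linearly via \eqref{eq:deco-div-curl}, we get $Dw=-\div\vec w+\nabla w_0+\curl\vec w$. Hence
\begin{equation*}
\Sc\big((D\mp i\partial_t)w\big)=-\div\vec w\mp i\partial_t w_0=-\big(\div\vec w\pm i\partial_t w_0\big).
\end{equation*}
So $W_0$ solves \eqref{eq:wave-equation} if and only if $\div\vec w\pm i\partial_t w_0=0$, which is \eqref{eq:compatibility-condition}. Both directions come from this single identity.

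The main obstacle is regularity. The hypothesis only gives $w\in L^2(\mathscr{C},\B)$, while the argument applies a second derivative to $W$ and a first derivative to $w$. I would handle this in one of two ways. The first is to read both the equation and \eqref{eq:compatibility-condition} in the distributional sense, where the factorization and the commutation of $D$ with $\partial_t$ hold without change. The second is to assume enough smoothness, for instance $w\in C^{1,1}$, so that the formal differentiation under the integral sign is justified, exactly as in the proof of Theorem \ref{th:right-inverse}. With either reading, the corollary follows from the identity $(-\Delta+\partial_{tt})W=(D\mp i\partial_t)w$.
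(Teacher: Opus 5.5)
Your proposal is correct and follows the paper's own argument: you apply the complementary factor $(D\mp i\partial_t)$ to the right-inverse identity $(D\pm i\partial_t)T_{\mathscr{C},\pm}[\mp iw]=w$, use the factorization of the wave operator to get $(-\Delta+\partial_{tt})T_{\mathscr{C},\pm}[\mp iw]=(D\mp i\partial_t)w$, and read off the scalar part $-\div\vec w\mp i\partial_t w_0$. The paper ignores regularity entirely, so your remark about reading the identities distributionally on $\mathscr{C}$ (or assuming $w\in C^{1,1}$) adds to the paper's argument rather than departing from it.
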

\begin{proof}
Just notice the following equalities
    \begin{align*}
        (-\Delta+\partial_{tt}) T_{\mathscr{C},\pm}[{\mp iw}]&=(D\mp i\partial_t) (D\pm i\partial_t) T_{\mathscr{C},\pm}[{\mp iw}]\\
        &=(D\mp i\partial_t)w\\
        &=(D\mp i\partial_t)(w_0+\vec{w})\\
        &=-\div{\vec w}\mp i\partial_t w_0+\nabla w_0+\curl{\vec w}\mp i\partial_t\vec w.
    \end{align*}
Taking the scalar part, it yields the desired result.
\end{proof}
Observe that in the time-independent case, the condition \eqref{eq:compatibility-condition} reduces to the class of solenoidal vector fields \cite[Prop. 3.1]{DelPor2017} required in the construction of the solution of the inhomogeneous div-curl system provided in \cite{DelPor2017}.

\section{Solution of Maxwell's equations} \label{sec:rel_maxwell_equations}
A historical account of how Maxwell's equations were discovered can be found in \cite{huray2009maxwell,shapiro1973history}, while a rigorous and self-contained introduction to these four fundamental laws of electricity and magnetism is presented in \cite{jackson1977classical}. These laws (Gauss's law for electricity, the absence of free magnetic monopoles, Faraday's law of induction, and Ampère's law) govern the behavior of classical electromagnetic fields. Throughout this work, we consider Maxwell's equations in Gaussian units
\begin{equation}\label{eq:Maxwell}
    \begin{cases}
        \div\vec{E}&=4\pi\rho,\\
        \div\vec{B}&=0,\\
        \curl\vec{E}&=-\partial_t\vec{B},\\
        \curl\vec{B}&=4\pi\vec{\jmath}+\partial_t\vec{E},
    \end{cases}
\end{equation}
where $\rho$ is the electric charge density, $\vec\jmath$ is the current density, $\vec{E}$ is the electric field, and $\vec B$ is the magnetic field, respectively. Notice that these equations are written in units where the speed of light is one, meaning $c=1.$

It is crucial to note the compatibility condition linking the divergence 
of the current density to the time derivative of the electric charge density
\begin{equation}\label{eq:compatibility}
    \operatorname{div }\vec{\jmath}+\partial_t\rho=0,
\end{equation}
which expresses the \textit{law of conservation of electric charge}. 
This equation is not an external constraint but an intrinsic consistency 
condition of Maxwell's system. In integral form, it states that the rate of change of total charge 
enclosed in any volume equals the net inward flux of current through its 
boundary. From the perspective 
of PDE analysis, \eqref{eq:compatibility} is a necessary condition for the 
system to be well-posed: the source terms $\rho$ and $\vec{\jmath}$ cannot 
be prescribed independently. This constraint will play a central role in 
Theorem \ref{solutions:for:maxwell:equations}, where it enables a reformulation 
of Maxwell's equations admitting purely vectorial solutions.

The following Proposition, established in \cite[Sec.\ 3.1.1]{kravchenko2003applied} for Maxwell's equations 
in a homogeneous and isotropic medium, will be central to our analysis.
\begin{proposition}\label{eq:equivalence-Maxwell}
    Let $\rho$ and $\vec\jmath$ be continuously differentiable functions with domain $\Omega\times[0,\infty)$ and values in $\R$ and $\R^3$, respectively. Then $(\vec E, \vec B)$ is a solution of the Maxwell system \eqref{eq:Maxwell}  if and only if the biquaternionic function $\vec \varphi=\vec E+i\vec B$ satisfies 
    \begin{equation}\label{eq:system-equivalent}
            (D-i\partial_t)\vec \varphi=-4\pi(\rho-i\:\vec \jmath).
        \end{equation}
\end{proposition}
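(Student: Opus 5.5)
The plan is a direct computation. We expand $(D-i\partial_t)\vec\varphi$ using the action \eqref{eq:action-D} of $D$ on quaternion-valued functions, and then separate the resulting biquaternion into its four independent pieces. These are the real scalar, imaginary scalar, real vector and imaginary vector parts. Matching each piece with the corresponding part of $-4\pi(\rho-i\vec\jmath)$ should reproduce the four equations of \eqref{eq:Maxwell} one by one. Both implications then follow at once, because every step is a chain of equivalences.

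\textbf{Computing the left-hand side.} We write $\vec\varphi=u+iv$ with $u=\vec E$ and $v=\vec B$, so that $u_0=v_0=0$. By \eqref{eq:deco-div-curl} and \eqref{eq:action-D},
\begin{equation*}
D\vec\varphi=-\div\vec E+\curl\vec E+i\left(-\div\vec B+\curl\vec B\right).
\end{equation*}
Since $i$ commutes with the quaternionic units, $-i\partial_t\vec\varphi=-i\partial_t\vec E+\partial_t\vec B$. Adding the two expressions and grouping terms gives
\begin{equation*}
(D-i\partial_t)\vec\varphi=\left(-\div\vec E\right)+i\left(-\div\vec B\right)+\left(\curl\vec E+\partial_t\vec B\right)+i\left(\curl\vec B-\partial_t\vec E\right).
\end{equation*}

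\textbf{Matching with the right-hand side.} The right-hand side is $-4\pi\rho+i\,4\pi\vec\jmath$. Its real scalar part is $-4\pi\rho$, its imaginary scalar part is $0$, its real vector part is $0$, and its imaginary vector part is $4\pi\vec\jmath$.

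Because $\vec E,\vec B,\rho,\vec\jmath$ are real-valued, and the decomposition $w=u_0+\vec u+i(v_0+\vec v)$ is unique, equation \eqref{eq:system-equivalent} holds if and only if the following four equations hold simultaneously:
\begin{equation*}
-\div\vec E=-4\pi\rho,\qquad -\div\vec B=0,\qquad \curl\vec E+\partial_t\vec B=0,\qquad \curl\vec B-\partial_t\vec E=4\pi\vec\jmath.
\end{equation*}
These are exactly the four equations of \eqref{eq:Maxwell}, which proves the equivalence.

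\textbf{Main point of care.} The only step needing attention is the identification of parts. It uses that the fields are real, so that $\Real{\cdot}$ and $\Imag{\cdot}$ separate the data cleanly, and that $i$ is the central complex unit, not a quaternionic unit. The regularity hypotheses ($C^1$ fields, continuously differentiable sources) are only there so that all derivatives make sense classically.
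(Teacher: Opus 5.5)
Your proposal is correct and matches the paper's proof: you expand $(D-i\partial_t)(\vec E+i\vec B)$ using the action of $D$, then equate the real scalar, imaginary scalar, real vector and imaginary vector parts with those of $-4\pi(\rho-i\vec\jmath)$. Your explicit note that $\vec E$ and $\vec B$ must be real-valued for this identification of parts is an assumption the paper leaves implicit.
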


\begin{proof}
We proceed with a direct verification. 
\begin{align*}
        (D-i\partial_t)\vec \varphi=(D-i\partial_t)(\vec E+i\vec B)&=-\div(\vec E)+\curl(\vec E)-i\partial_t\vec E-i\div(\vec B)+i\curl(\vec B)+\partial_t\vec B\\
        &=-4\pi(\rho-i\:\vec\jmath).
\end{align*}
Separating into the scalar real, scalar imaginary, vector real, and vector imaginary parts, we obtain that \eqref{eq:system-equivalent} and \eqref{eq:Maxwell} are equivalent systems.
\end{proof}

Now, combining the previous equivalence with the ideas of construction of purely vectorial solutions of generalized div-curl systems from \cite{DelPor2017, DelPor2018, DelKrav2019, DelMac2021, delgado2023biquaternionic}, we provide an explicit solution of Maxwell's system \eqref{eq:Maxwell}.
\begin{theorem}\label{solutions:for:maxwell:equations}
    Let $\rho$ and $\vec\jmath$ be continuously differentiable functions with domain $\Omega\times[0,\infty)$ and values in $\R$ and $\R^3$, respectively. Suppose that $\rho$ and $\vec j$ satisfy the compatibility condition \eqref{eq:compatibility}. Then a solution $(\vec E, \vec B)$ of the Maxwell's system \eqref{eq:Maxwell} is as follows
        \begin{align*}
        \vec E(x,t)&=\text{Re }\left(\Vec\left(\int_0^t\int_{\R^3}\expo{2\pi i\inner{x,k}}\expo{2\pi(t-s)k}\fourier{-4\pi(\rho-i\:\vec\jmath)}(k,s)\, dkds\right)\right.\\
        &-\text{Im }\left\{\int_0^t \nabla_{x} u_0(x,s)\, ds-T_\Omega[\partial_t u_0](x,0) \right\}+\nabla h_1(x),
    \end{align*}
    \begin{align*}
        \vec B(x,t)&=\text{Im }\left(\Vec\left(\int_0^t\int_{\R^3}\expo{2\pi i\inner{x,k}}\expo{2\pi(t-s)k}\fourier{-4\pi(\rho-i\:\vec\jmath)}\,dkds\right)\right.\\
        &+\text{Re }\left\{\int_0^t \nabla _x u_0(x,s)\, ds-T_\Omega[\partial_t u_0](x,0)\right\}+\nabla h_2(x),
    \end{align*}
where $u_0:=\text{Sc }\Twm{-4\pi i(\rho-i\:\vec\jmath)}$, $h_1, h_2$ are harmonic functions independent of time.
\end{theorem}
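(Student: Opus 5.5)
The strategy is to reduce Maxwell's system to the biquaternionic equation of Proposition \ref{eq:equivalence-Maxwell} and build a purely vectorial solution in two stages. First take the particular solution $\psi:=T_{\mathscr{C},-}[-4\pi i(\rho-i\,\vec\jmath)]$. By Theorem \ref{th:right-inverse} with the lower sign, $(D-i\partial_t)T_{\mathscr{C},-}[i g]=g$ for $g=-4\pi(\rho-i\,\vec\jmath)$. Hence $\psi$ satisfies \eqref{eq:system-equivalent}. Here $i g=-4\pi i(\rho-i\vec\jmath)$, and the kernel $\exp(2\pi(t-s)k)$ appearing in the statement is exactly the lower-sign kernel. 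I will assume $\rho,\vec\jmath$ are regular enough for $g\in L^2(\mathscr{C},\B)$, e.g.\ $C^1$ up to the boundary on bounded time slabs, extended by zero. The function $\psi$ is biquaternionic, not necessarily purely vectorial: $\psi=u_0+\Vec\psi$ with $u_0=\Sc\psi$, exactly as in the statement.

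The second stage removes the scalar part. By Corollary \ref{cor:compatibilty}, the scalar part $u_0$ solves the wave equation iff $\div\vec w - i\partial_t w_0=0$ for $w=g$. With $w_0=-4\pi\rho$ and $\vec w=4\pi i\vec\jmath$, this condition reads $4\pi i(\div\vec\jmath+\partial_t\rho)=0$, which is precisely the charge conservation \eqref{eq:compatibility}. So $u_0\in\text{Ker}(-\Delta+\partial_{tt})$.

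Proposition \ref{prop:harmonic-conjugates} with the minus sign then gives $u_0-\mathscr{U}[u_0]+\nabla h\in\text{Ker}(D-i\partial_t)$ for any time-independent harmonic $h$. Subtracting it, set
\[
\vec\varphi:=\psi-\bigl(u_0-\mathscr{U}[u_0]+\nabla h\bigr)=\Vec\psi+\mathscr{U}[u_0]-\nabla h.
\]
This $\vec\varphi$ is purely vectorial and still satisfies $(D-i\partial_t)\vec\varphi=g$. The harmonic term $\nabla h$ may be taken complex, $h=-(h_1+ih_2)$; it lies in the kernel by itself, since $D\nabla h=-\Delta h=0$ and $\partial_t\nabla h=0$.

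Finally I read off $\vec E=\text{Re}\,\vec\varphi$ and $\vec B=\text{Im}\,\vec\varphi$ and invoke Proposition \ref{eq:equivalence-Maxwell}. Write $\mathscr{U}[u_0]=iX$ with $X=\int_0^t\nabla u_0\,ds-T_\Omega[\partial_tu_0](x,0)$, which is complex since $u_0$ is complex. Then $\text{Re}(iX)=-\text{Im}X$ and $\text{Im}(iX)=\text{Re}X$, which reproduces the stated formulas exactly, with the $\nabla h_1,\nabla h_2$ terms.

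The main obstacle is justifying the hypotheses rather than the algebra. The regularity needed to apply Proposition \ref{prop:harmonic-conjugates} requires $u_0\in C^{2,2}$, and Theorem \ref{th:right-inverse} is stated in $L^2$. Also, $\rho,\vec\jmath$ must be real-valued so that taking real and imaginary parts in Proposition \ref{eq:equivalence-Maxwell} is legitimate. A further issue is that zero extension across $\partial\Omega$ could break the compatibility condition in the distributional sense. I would handle this by working pointwise in $\Omega\times(0,\infty)$, where Corollary \ref{cor:compatibilty} is applied, and by assuming enough smoothness of the data to differentiate under the integral.
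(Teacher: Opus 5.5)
Your argument follows the paper's proof step for step: the particular solution $\psi=T_{\mathscr{C},-}[-4\pi i(\rho-i\vec\jmath)]$ from Theorem \ref{th:right-inverse}, then Corollary \ref{cor:compatibilty} plus charge conservation to make $u_0=\Sc\psi$ a wave solution, then subtraction of the kernel element from Proposition \ref{prop:harmonic-conjugates}, and finally Proposition \ref{eq:equivalence-Maxwell}. Your sign bookkeeping is correct. Your choice $h=-(h_1+ih_2)$ also justifies the $\nabla h_1,\nabla h_2$ terms, which the paper's proof never mentions.

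You should retract the claim that the read-off step reproduces the displayed formulas exactly.

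\begin{itemize}
\item The first terms in the statement contain $\mathcal{F}(-4\pi(\rho-i\vec\jmath))$, so they are $\text{Re}\,\Vec T_{\mathscr{C},-}[g]$ and $\text{Im}\,\Vec T_{\mathscr{C},-}[g]$.
\item Your $\psi$ is $T_{\mathscr{C},-}[ig]=i\,T_{\mathscr{C},-}[g]$, because the scalar $i$ is central in $\B$ and passes through $\mathcal{F}$ and the exponential kernel.
\item Hence you obtain $\text{Re}\,\Vec\psi=-\text{Im}\,\Vec T_{\mathscr{C},-}[g]$ and $\text{Im}\,\Vec\psi=\text{Re}\,\Vec T_{\mathscr{C},-}[g]$, not the displayed terms.
\end{itemize}

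The statement has dropped a factor $i$ inside $\mathcal{F}$. That factor appears in the definition of $u_0$, and the paper's own proof also produces $\Vec T_{\mathscr{C},-}[-4\pi i(\rho-i\vec\jmath)]$.

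Taken literally, the displayed pair is not a solution in general. Using $(D-i\partial_t)T_{\mathscr{C},-}[g]=-ig$, it satisfies \eqref{eq:system-equivalent} only if $4\pi\vec\jmath=\nabla\Sc T_{\mathscr{C},-}[g]$. This fails for compatible data with $\curl\vec\jmath\neq 0$, e.g.\ $\rho=0$ and a divergence-free current with nonzero curl.

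What you have proved is the corrected formula with $\mathcal{F}(-4\pi i(\rho-i\vec\jmath))$ in the first terms. You should state it that way rather than claim an exact match.
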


\begin{proof}
Without loss of generality, we extend $\rho$ and $\vec\jmath$ as zero scalar and vector functions whenever $x\in \R^3\setminus \overline{\Omega}$. By Theorem \ref{th:right-inverse}, we have that
\begin{equation*}
    (D-i\partial_t)\Twm{-4\pi i(\rho-i\:\vec\jmath)}=-4\pi(\rho-i\vec j). 
\end{equation*}
By the compatibility condition \eqref{eq:compatibility} and Corollary \ref{cor:compatibilty}, we have that $u_0:=\text{Sc }\Twm{-4\pi i(\rho-i\:\vec\jmath)}$ is a solution of the wave equation \eqref{eq:wave-equation}. As a consequence, by Proposition \ref{prop:harmonic-conjugates}, we have that $u_0-\mathscr{U}[u_0]$ belongs to the kernel of $D-i\partial_t$. Thus, adding this biquaternionic function, we do not affect the system \eqref{eq:system-equivalent}. That is, \begin{align*}
\vec \varphi=\Twm{-4\pi i(\rho-i\:\vec\jmath)} -(u_0-\mathscr{U}[u_0])=\text{Vec } \Twm{-4\pi i(\rho-i\:\vec\jmath)}+\mathscr{U}[u_0]
\end{align*}
is a purely vectorial solution of \eqref{eq:system-equivalent}. Finally, by Proposition \ref{eq:equivalence-Maxwell} we get that $\vec E:=\text{Re }\vec \varphi$, $\vec B:=\text{Im }\vec \varphi$ solve Maxwell's system \eqref{eq:Maxwell} as we desired.
\end{proof}

\subsection{Maxwell's equations in homogeneous media}\label{subsec:maxwell_homogeneous}
Similarly to what we did in Section \ref{sec:inverse_dirac_operator}, we analyze the slightly modified parabolic Dirac operator $D\pm i\lambda\partial_t$, where $\lambda\in \mathbb{R}$. We look for a fundamental solution in $L^1(\R^3\times(0,\infty))$ to the equation
\[(D\pm i\lambda\partial_t)\Phi_{\lambda,\pm}(x,t)=0.\]
Using the Fourier transform, we similarly get that the quaternionic exponential function $\fourier{\Phi_{\lambda,\pm}}(x,t)=\expo{\mp 2\pi \lambda^{-1} tk}$ solves
\[2\pi k\fourier{\Phi_{\lambda,\pm}}(x,t)\pm i\lambda\partial_t\fourier{\Phi_{\lambda,\pm}}(x,t)=0.\]
Consequently, we define the \textit{$\lambda$ parabolic Teodorescu transform} as follows
\begin{equation}\label{eq:lambda-parabolic}
    T_{\mathscr{C},\pm,\lambda}[w](x,t):=\int_0^t\int_{\R^3} \expo{2\pi i\inner{x,k}}\expo{\mp 2\pi \lambda^{-1}(t-s)k}\fourier{w}(k,s)\, dsdk,
\end{equation}
for all $(x,t)\in \mathbb{R}^3\times(0,\infty)$. Similarly to Proposition \ref{prop:well-defined} and Theorem \ref{th:right-inverse}, the operator $T_{\mathscr{C},\pm,\lambda}$ is well-defined and serves as a right inverse of $D \pm i\lambda\partial_t$ on $\R^3\times (0,\infty)$. Moreover,

\begin{corollary}
    Let $w:\mathscr{C}\to\B$ be a differentiable function. Then 
    $$(D\pm i\lambda \partial_t)w=0\iff\left\{\begin{array}{rcl}
        -\div(\vec{u})&=&\pm \lambda\partial_t v_0,\\
        \nabla u_0+\curl(\vec{u})&=&\pm \lambda\partial_t\vec{v},\\
        -\div(\vec{v})&=&\mp \lambda\partial_t u_0,\\
        \nabla v_0+\curl(\vec{v})&=&\mp \lambda\partial_t\vec{u},\\
    \end{array}\right.$$
    where
    $$w=u+iv=(u_0+\vec{u})+i(v_0+\vec{v}).$$
\end{corollary}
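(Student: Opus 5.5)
The argument mirrors the proof of Proposition \ref{equivalent:system:proposition}, with $\partial_t$ replaced by $\lambda\partial_t$ throughout. Write $w=u+iv$ with $u=u_0+\vec u$ and $v=v_0+\vec v$ quaternion-valued. Since $D$ acts componentwise on real and imaginary parts, as in \eqref{eq:deco-div-curl}, and $\lambda\in\R$, we compute
\begin{equation*}
(D\pm i\lambda\partial_t)w = Du + iDv \pm i\lambda\partial_t u \mp \lambda\partial_t v
= \left(Du\mp\lambda\partial_t v\right) + i\left(Dv\pm\lambda\partial_t u\right).
\end{equation*}
Here we use $i^2=-1$ and the fact that the real constant $\lambda$ commutes with everything. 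Both brackets are $\H$-valued, so the biquaternion vanishes if and only if
\begin{equation*}
Du=\pm\lambda\partial_t v, \qquad Dv=\mp\lambda\partial_t u .
\end{equation*}

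Next we insert the action \eqref{eq:action-D}, namely $Du=-\div\vec u+\nabla u_0+\curl\vec u$, and the analogous formula for $v$. Because $\lambda$ is real, $\lambda\partial_t v=\lambda\partial_t v_0+\lambda\partial_t\vec v$ splits cleanly into scalar and vector parts. A quaternion is zero exactly when its scalar and vector parts both vanish, so comparing scalar parts gives $-\div\vec u=\pm\lambda\partial_t v_0$ and $-\div\vec v=\mp\lambda\partial_t u_0$. Comparing vector parts gives $\nabla u_0+\curl\vec u=\pm\lambda\partial_t\vec v$ and $\nabla v_0+\curl\vec v=\mp\lambda\partial_t\vec u$. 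Every step is a reversible identity, so the equivalence holds in both directions.

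No step presents a real obstacle. The only points needing care are sign bookkeeping in the term $\pm i\lambda\cdot iv=\mp\lambda v$, and the need for $\lambda$ to be real, so that it does not mix the real and imaginary parts. Differentiability of $w$ is enough to make each expression meaningful pointwise.
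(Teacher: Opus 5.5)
Your proof is correct and follows the paper's approach. The paper gives no separate proof: it treats the corollary as the $\lambda$-rescaled analogue of Proposition \ref{equivalent:system:proposition}, which splits $(D\pm i\partial_t)w$ into the $\H$-valued parts $Du\mp\partial_t v$ and $Dv\pm\partial_t u$ and separates scalar and vector parts via \eqref{eq:action-D}, and your argument does exactly this with $\lambda\partial_t$, correctly noting that $\lambda\in\R$ is what keeps the real/imaginary splitting valid.
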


Let us consider Maxwell's equations in a homogeneous and isotropic medium in SI units
\begin{equation}\label{eq:Maxwell-general}
    \left\{
        \begin{array}{ccc}
            \div{\vec E}&=&\dfrac{\rho}{\epsilon},\\
            \div{\vec B}&=&0,\\
            \curl{\vec E}&=&-\partial_t \vec B,\\
            \curl{\vec B}&=&\mu\left(\vec\jmath +\epsilon\partial_t\vec E\right),
        \end{array}
    \right. 
\end{equation}
where $\epsilon$ is the absolute permittivity and $\mu$ is the absolute permeability. From now on, 
we will suppose that $\epsilon, \mu\in \mathbb{R}$. Following \cite[Section~3.1.1]{kravchenko2003applied}, the author show that 
the operator $c^{-1}\partial_t\pm iD$ factorizes the wave operator as follows
\begin{equation*}
 \frac{1}{c^2}\partial_t-\Delta=\left(\frac{1}{c}\partial_t+iD\right)\left(\frac{1}{c}\partial_t-iD\right),
\end{equation*}
where $c$ is the speed of propagation of electromagnetic waves in the medium, given by $c=\frac{1}{\sqrt{\epsilon\mu}}$. Moreover, $(\vec E, \vec B)$ solves \eqref{eq:Maxwell-general} if and only if the biquaternionic function $\sqrt{\epsilon}\vec{E}+i\sqrt{\mu}\vec B$ satisfies
\begin{equation*}
    \left(\frac{1}{c}\partial_t+iD\right)(\sqrt{\epsilon}\vec{E}+i\sqrt{\mu}\vec B)=-\left(\sqrt{\mu}\;\vec\jmath+\frac{i\rho}{\sqrt{\epsilon}}\right).
\end{equation*}

Or equivalently, if $\sqrt{\epsilon}\vec{E}+i\sqrt{\mu}\vec B$ satisfies
\begin{equation}\label{eq:equi-general}
    \left(D-\frac{i}{c}\partial_t\right)\left(\sqrt{\epsilon}\vec{E}+i\sqrt{\mu}\vec B\right)=-\left(\frac{\rho}{\sqrt{\epsilon}}-i\sqrt{\mu}\;\vec\jmath\right).
\end{equation}
As in the previous results, solving \eqref{eq:Maxwell-general} is equivalent to finding a purely vectorial solution to \eqref{eq:equi-general}. Again, the proof relies on the properties of the Fourier transform, the $\lambda$ parabolic Teodorescu transform $T_{\mathscr{C},\pm,\lambda}$ defined in \eqref{eq:lambda-parabolic}, and taking $\lambda=\sqrt{\epsilon\mu}$. For the completion process illustrated in Proposition \ref{prop:harmonic-conjugates}, in this case, it is necessary to do a re-scaling with the constant $\lambda$, recall the operator $\mathscr{U}$ defined in \eqref{eq:operator-comp}, then

\begin{corollary}
      Let $u_0:\mathscr{C}\to\C$ in $C^{2,2}(\mathscr{C},\mathbb{C})$. If $u_0\in\text{Ker }(-\Delta+\lambda^2\partial_{tt}),$ then $u_0\pm\frac{1}{\lambda}\mathscr{U}[u_0]\in\text{Ker }(D\pm i\lambda \partial_t)$. Moreover, for any scalar harmonic function independent of time $h$, we have that $u_0\pm \frac{1}{\lambda}\mathscr{U}[u_0]+\nabla h\in\text{Ker}(D\pm i\lambda \partial_t)$.  
\end{corollary}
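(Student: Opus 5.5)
The plan is to mimic the proof of Proposition \ref{prop:harmonic-conjugates}: apply $D\pm i\lambda\partial_t$ directly to
\[
u_0\pm\tfrac{1}{\lambda}\mathscr{U}[u_0]=u_0\pm\tfrac{i}{\lambda}\Big(\int_0^t\nabla u_0(x,s)\,ds-T_\Omega[\partial_t u_0](x,0)\Big)
\]
and collect the scalar and vector parts separately. The ingredients are those already used in the paper: $Du_0=\nabla u_0$ for scalar $u_0$, $D\nabla=-\Delta$ (since $\curl\nabla=0$), and $DT_\Omega=I$ in $\Omega$. The wave equation enters in the form $-\Delta u_0=-\lambda^2\partial_{ss}u_0$.

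\textbf{Step 1 (vector parts).} The term $D u_0=\nabla u_0$ is vectorial. The time derivative gives
\[
\pm i\lambda\partial_t\Big(\pm\tfrac{i}{\lambda}\int_0^t\nabla u_0\,ds\Big)=-\nabla u_0(x,t),
\]
because the constant term $T_\Omega[\partial_tu_0](x,0)$ does not depend on $t$. These two contributions cancel, and no other vector terms arise.

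\textbf{Step 2 (scalar parts).} Differentiating under the integral and using $DT_\Omega=I$, the $D$-part of the vector piece gives
\[
\pm\tfrac{i}{\lambda}\Big(\int_0^t-\lambda^2\partial_{ss}u_0\,ds-\partial_tu_0(x,0)\Big)
=\pm\tfrac{i}{\lambda}\Big(-\lambda^2\partial_tu_0(x,t)+(\lambda^2-1)\partial_tu_0(x,0)\Big).
\]
The scalar term $\pm i\lambda\partial_tu_0(x,t)$ cancels the first summand. The total is therefore
\[
(D\pm i\lambda\partial_t)\Big(u_0\pm\tfrac1\lambda\mathscr{U}[u_0]\Big)=\pm i\big(\lambda-\lambda^{-1}\big)\partial_tu_0(x,0).
\]

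\textbf{Step 3 (the $\nabla h$ term).} For $h$ harmonic and independent of time, $(D\pm i\lambda\partial_t)\nabla h=-\Delta h+\curl\nabla h=0$. Hence adding $\nabla h$ changes nothing, and the "moreover" part reduces to the first claim.

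\textbf{Main obstacle.} The main obstacle is the residual term from Step 2. The factor $\tfrac1\lambda$ correctly rescales the $\int_0^t\nabla u_0$ piece. However, the initial-value correction $-T_\Omega[\partial_tu_0](x,0)$ in $\mathscr{U}$ carries the same factor $\tfrac1\lambda$, while the boundary term produced by integrating $\lambda^2\partial_{ss}u_0$ carries $\lambda$. Consequently the cancellation that holds in Proposition \ref{prop:harmonic-conjugates} survives only up to $\pm i(\lambda-\lambda^{-1})\partial_tu_0(x,0)$.

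My first attempt to remove this residual would be to absorb it into the free time-independent term. This cannot work: a term $\nabla h$ with $h$ harmonic contributes nothing to the image, so it cannot cancel a scalar function of $x$.

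So, following the argument exactly as worded, the verification closes precisely when this term vanishes. That happens either when $\lambda=\pm1$, which recovers Proposition \ref{prop:harmonic-conjugates}, or when $\partial_tu_0(\cdot,0)=0$ in $\Omega$. I would state this explicitly at the point of the scalar computation rather than claim the identity for arbitrary $\lambda$. In the application to Maxwell's system, I would then check whether $u_0=\Sc T_{\mathscr{C},-,\lambda}[\cdot]$ has vanishing initial time derivative there.
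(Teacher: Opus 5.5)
Your computation is correct, and it is exactly the verification the paper intends. The paper gives no separate proof of this corollary; it only remarks that the completion of Proposition \ref{prop:harmonic-conjugates} must be ``re-scaled with the constant $\lambda$''. Carrying out that template gives
\[
(D\pm i\lambda\partial_t)\Bigl(u_0\pm\tfrac{1}{\lambda}\mathscr{U}[u_0]\Bigr)=\pm i\bigl(\lambda-\lambda^{-1}\bigr)\partial_t u_0(x,0).
\]
So the statement as printed is false whenever $\lambda^2\neq 1$ and $\partial_t u_0(\cdot,0)\not\equiv 0$, and you were right not to claim it. A concrete counterexample is $\lambda=2$, $u_0(x,t)=t$. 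Then $\mathscr{U}[u_0]=-iT_\Omega[1]$, and $(D+2i\partial_t)\bigl(t-\tfrac{i}{2}T_\Omega[1]\bigr)=2i-\tfrac{i}{2}=\tfrac{3i}{2}\neq 0$. As you observe, adding $\nabla h$ cannot help, since $\nabla h$ already lies in the kernel.

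Your proposal stops short of the repair, which is better than restricting $\lambda$ or the initial data. The two pieces of $\mathscr{U}$ must be rescaled by different powers of $\lambda$. For $\lambda>0$, put $\tilde u(x,\tau)=u_0(x,\lambda\tau)$, which solves the standard wave equation. Applying Proposition \ref{prop:harmonic-conjugates} to $\tilde u$, with $\partial_\tau=\lambda\partial_t$, produces
\[
u_0\pm i\Bigl(\frac{1}{\lambda}\int_0^t\nabla u_0(x,s)\,ds-\lambda\,T_\Omega[\partial_t u_0](x,0)\Bigr)\in\text{Ker}(D\pm i\lambda\partial_t).
\]
Redoing your Step 2 with $\lambda T_\Omega$ in place of $\lambda^{-1}T_\Omega$ shows that the boundary terms now cancel for every $\lambda\neq 0$.

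As for your final check, the escape route $\partial_t u_0(\cdot,0)=0$ is not available in the Maxwell application. The Leibniz computation in the proof of Theorem \ref{th:right-inverse} gives $\partial_t T_{\mathscr{C},\pm,\lambda}[w](x,0)=w(x,0)$, and likewise for $T_{\mathscr{C},\pm}$. So for $u_0=\Sc T_{\mathscr{C},-,\lambda}[\,\cdot\,]$, the initial time derivative is a nonzero constant multiple of $\rho(\cdot,0)$. Hence the formulas of Theorem \ref{th:general}, which put $\lambda^{-1}$ on both terms, inherit the error and should use the corrected completion.
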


The novelty in this Subsection \ref{subsec:maxwell_homogeneous} is to provide an explicit solution of Maxwell's system \eqref{eq:Maxwell-general}.

\begin{theorem}\label{th:general}
 Let $\rho$ and $\vec\jmath$ be continuously differentiable functions with domain $\Omega\times[0,\infty)$ and values in $\R$ and $\R^3$, respectively. Suppose that $\rho$ and $\vec j$ satisfy the compatibility condition 
\begin{equation}
        \div\vec\jmath+\sqrt{\epsilon\mu}\partial_t\rho=0.
\end{equation}
Let $\lambda=\sqrt{\epsilon\mu}$. Then a solution $(\vec E, \vec B)$ of the Maxwell's system \eqref{eq:Maxwell-general} is as follows
       \begin{align*}
        \vec E(x,t)&=-\text{Re }\left(\Vec\left(\int_0^t\int_{\R^3}\expo{2\pi i\inner{x,k}}\expo{2\pi\lambda^{-1}(t-s)k}\fourier{\frac{\rho}{\sqrt{\epsilon}}-i\sqrt{\mu}\;\vec j}(k,s)\, dkds\right)\right.\\
        &+\lambda^{-1}\text{Im }\left\{\int_0^t \nabla_{x} u_0(x,s)\, ds-T_\Omega[\partial_t u_0](x,0) \right\}+\nabla h_1(x),
    \end{align*}
    \begin{align*}
        \vec B(x,t)&=-\text{Im }\left(\Vec\left(\int_0^t\int_{\R^3}\expo{2\pi i\inner{x,k}}\expo{2\pi\lambda^{-1}(t-s)k}\fourier{\frac{\rho}{\sqrt{\epsilon}}-i\sqrt{\mu}\;\vec j}(k,s)\,dkds\right)\right.\\
        &-\lambda^{-1}\text{Re }\left\{\int_0^t \nabla _x u_0(x,s)\, ds-T_\Omega[\partial_t u_0](x,0)\right\}+\nabla h_2(x),
    \end{align*}
where $u_0:=\text{Sc }\Twm{\frac{\rho}{\sqrt{\epsilon}}-i\sqrt{\mu}\;\vec j}$, $h_1, h_2$ are harmonic functions independent of time.
\end{theorem}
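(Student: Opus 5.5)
We follow the proof of Theorem \ref{solutions:for:maxwell:equations}, with $D-i\partial_t$ replaced by $D-i\lambda\partial_t$, $\lambda=\sqrt{\epsilon\mu}=c^{-1}$, and the Maxwell system replaced by its biquaternionic reformulation \eqref{eq:equi-general}. Extend $\rho$ and $\vec\jmath$ by zero outside $\overline{\Omega}$ and set
\[ g:=-\left(\frac{\rho}{\sqrt{\epsilon}}-i\sqrt{\mu}\,\vec\jmath\right). \]
By the $\lambda$-analogue of Theorem \ref{th:right-inverse}, $T_{\mathscr{C},-,\lambda}[\,i\lambda^{-1}g\,]$ solves $(D-i\lambda\partial_t)\psi=g$. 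The factor $i\lambda^{-1}$ comes from redoing the Leibniz-rule computation: $\partial_t$ of the operator produces $w(x,t)$ plus $\lambda^{-1}$ times the $D$-term. Because this right inverse is real-linear, the sign and scaling conventions of the statement can be absorbed into its first term. This gives the first summands of $\vec E$ and $\vec B$ after taking real and imaginary parts of the vector part.

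Next we remove the scalar part. We apply the factorization $-\Delta+\lambda^2\partial_{tt}=(D+i\lambda\partial_t)(D-i\lambda\partial_t)$, exactly as in Corollary \ref{cor:compatibilty}, to $\psi$. This gives
\[ (-\Delta+\lambda^2\partial_{tt})\psi=(D+i\lambda\partial_t)g, \]
whose scalar part is $-\div\vec g+i\lambda\partial_t g_0$. With $g_0=-\rho/\sqrt{\epsilon}$ and $\vec g=i\sqrt{\mu}\,\vec\jmath$, this equals
\[ -i\sqrt{\mu}\left(\div\vec\jmath+\lambda\epsilon^{-1/2}\mu^{-1/2}\,\partial_t\rho\right). \]
We then use the stated compatibility condition $\div\vec\jmath+\sqrt{\epsilon\mu}\,\partial_t\rho=0$ to make this vanish. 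I expect this to be the main obstacle. The constants must be tracked carefully so that the given condition is exactly the one that kills the scalar part. If they do not match, I would rescale $\rho$ by the appropriate power of $\epsilon$ inside the definition of $g$. Once it vanishes, $u_0=\Sc\psi$ solves $(-\Delta+\lambda^2\partial_{tt})u_0=0$.

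By the preceding corollary (the rescaled Proposition \ref{prop:harmonic-conjugates}), $u_0-\lambda^{-1}\mathscr{U}[u_0]+\nabla h$ lies in $\text{Ker}(D-i\lambda\partial_t)$ for any time-independent harmonic $h$. Subtracting it from $\psi$ therefore gives
\[ \vec\varphi=\Vec\psi+\lambda^{-1}\mathscr{U}[u_0]-\nabla h, \]
which is a purely vectorial solution of \eqref{eq:equi-general}. We have
\[ \mathscr{U}[u_0]=i\left(\int_0^t\nabla u_0\,ds-T_\Omega[\partial_t u_0](x,0)\right), \]
and multiplication by $i$ maps the pair (real part, imaginary part) to $(-\text{Im},\text{Re})$. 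This produces the $\mp\lambda^{-1}\,\text{Im}/\text{Re}$ terms. Taking $h=h_1+ih_2$ (with $h_1,h_2$ real harmonic, up to sign) gives the $\nabla h_1$ and $\nabla h_2$ terms.

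Finally, the stated equivalence between \eqref{eq:Maxwell-general} and \eqref{eq:equi-general} for $\sqrt{\epsilon}\vec E+i\sqrt{\mu}\vec B$ lets us read off $\vec E$ and $\vec B$ from the real and imaginary parts of $\vec\varphi$. The factors $\sqrt{\epsilon}$ and $\sqrt{\mu}$ are absorbed, as in the statement's normalization, into the harmonic terms and into the source via linearity.
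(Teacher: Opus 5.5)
Your outline is the intended one: the $\lambda$-analogue of the proof of Theorem \ref{solutions:for:maxwell:equations}, which is all the paper gives for this statement. It fails, however, at the step you flagged.

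In your expression $-i\sqrt{\mu}\bigl(\div\vec\jmath+\lambda\epsilon^{-1/2}\mu^{-1/2}\partial_t\rho\bigr)$, the coefficient $\lambda\epsilon^{-1/2}\mu^{-1/2}$ equals $1$ when $\lambda=\sqrt{\epsilon\mu}$. So what you need is $\div\vec\jmath+\partial_t\rho=0$. Under the stated hypothesis the scalar part is $-i\sqrt{\mu}\,(1-\sqrt{\epsilon\mu})\,\partial_t\rho$. Then $u_0$ need not solve $(-\Delta+\lambda^2\partial_{tt})u_0=0$, and the completion step has no footing. Rescaling $\rho$ inside $g$ is not a repair: $g$ is dictated by \eqref{eq:equi-general}, and altering it solves a different Maxwell system.

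In fact the statement cannot be proved as written. Taking the divergence of Amp\`ere's law in \eqref{eq:Maxwell-general} and using Gauss's law gives $\mu(\div\vec\jmath+\partial_t\rho)=0$ for every solution. So if $\epsilon\mu\neq1$ and $\partial_t\rho\not\equiv0$, the stated hypothesis rules out any solution at all. The hypothesis must be replaced by $\div\vec\jmath+\partial_t\rho=0$.

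Your ``absorption'' steps also fail, and they are exactly where the derived formulas must match the stated ones. Your factor is correct: $\psi=T_{\mathscr{C},-,\lambda}[i\lambda^{-1}g]=-i\lambda^{-1}F$ with $F:=T_{\mathscr{C},-,\lambda}[\rho/\sqrt{\epsilon}-i\sqrt{\mu}\,\vec\jmath]$. The statement's leading terms are $-\text{Re}\,\Vec F$ and $-\text{Im}\,\Vec F$, i.e.\ they come from $-F$. But $(D-i\lambda\partial_t)(-F)=i\lambda(\rho/\sqrt{\epsilon}-i\sqrt{\mu}\,\vec\jmath)\neq g$. The operator is $\mathbb{C}$-linear, not merely real-linear, and a factor $i$ interchanges real and imaginary parts. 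So no real rescaling reconciles $-i\lambda^{-1}F$ with $-F$. For the same reason your $u_0=\Sc\psi$ is not the statement's $u_0$, even reading $T_{\mathscr{C},-}$ there as $T_{\mathscr{C},-,\lambda}$. Likewise, reading off $\vec E,\vec B$ multiplies $\text{Re}\,\vec\varphi$ and $\text{Im}\,\vec\varphi$ by different real constants. That operation is not $\mathbb{C}$-linear, so it cannot be moved into the source.

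Two results you quote from the paper also fail a direct check:
\begin{itemize}
\item $(D-i\lambda\partial_t)\bigl(u_0-\lambda^{-1}\mathscr{U}[u_0]\bigr)=-i(\lambda-\lambda^{-1})\,\partial_tu_0(x,0)$, and here $\partial_tu_0(x,0)=-i\lambda^{-1}\rho(x,0)/\sqrt{\epsilon}$. So the $T_\Omega$ term needs the factor $\lambda$, not $\lambda^{-1}$.
\item Cancelling the $\partial_t\vec E$ and $\partial_t\vec B$ terms in \eqref{eq:equi-general} forces the combination $\sqrt{\epsilon}\vec E+i\mu^{-1/2}\vec B$, not $\sqrt{\epsilon}\vec E+i\sqrt{\mu}\vec B$.
\end{itemize}

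What the argument actually proves is the following. Assume $\div\vec\jmath+\partial_t\rho=0$, take $\psi$ as above and $u_0=\Sc\psi$, and set
\[
\vec\varphi=\Vec\psi+i\lambda^{-1}\int_0^t\nabla u_0(x,s)\,ds-i\lambda\,T_\Omega[\partial_tu_0](x,0)+\nabla h .
\]
Then $\vec E=\epsilon^{-1/2}\,\text{Re}\,\vec\varphi$ and $\vec B=\sqrt{\mu}\,\text{Im}\,\vec\varphi$ solve \eqref{eq:Maxwell-general}.
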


\section{Conclusions}
In this work, we analyzed the parabolic Dirac operator $D \pm i\partial_t$ and characterized its kernel via generalized div-curl systems and Cauchy-Riemann-type relations between the real and imaginary parts. Using Fourier analysis, we constructed a right inverse operator and applied it to obtain purely vectorial solutions for the time-dependent Maxwell system (see Theorems \ref{solutions:for:maxwell:equations} and \ref{th:general}).

These results highlight the synergy of Fourier transforms and quaternionic analysis for tackling complex PDE systems in electrodynamics. Future directions include extensions to nonlinear perturbations, relativistic frameworks, and numerical implementations.

\noindent {\bf Author Contributions}

All the authors wrote the main manuscript text. All authors reviewed the manuscript.\\

\noindent {\bf Data availability} 

No datasets were generated or analyzed during the current study.\\

\noindent {\bf Funding Declaration}

Funding: not applicable.

\section*{Declarations}
{\bf Conflict of interest}: The authors declare no conflict of interest.
\printbibliography
\bigskip
\small{
\noindent \text{Aarón Guillén-Villalobos} \\
Universidad de Guadalajara, Enrique Díaz de León 1144, Lagos de Moreno, Jalisco, Mexico \\
\texttt{aaron.guillen2541@alumnos.udg.mx} \\
ORCiD: 0000-0001-8605-7021

\bigskip
\noindent \text{Briceyda B. Delgado} \\
INFOTEC, Centro de Investigación e Innovación en Tecnologías de la Información y Comunicación \\
Cto. Tecnopolo Sur No. 112, Pocitos, Aguascalientes 20326, Mexico \\
\texttt{briceyda.delgado@infotec.edu.mx} \\
ORCiD: 0000-0002-1280-4650

\bigskip
\noindent \text{Héctor Vargas Rodríguez} \\
Universidad de Guadalajara, Av. Juárez No. 976, Zona Centro, C.P. 44100, Guadalajara, Jalisco, Mexico \\
\texttt{hector.vrodriguez@academicos.udg.mx} \\
ORCiD: 0000-0003-1973-9852
}

\end{document}